\definecolor{dullmagenta}{rgb}{0.4,0,0.4}   
\definecolor{darkblue}{rgb}{0,0,0.4}
\definecolor{darkgreen}{rgb}{0,0.4,0}
\newtheorem{TheoremLetter}{Theorem}
{}
\newtheorem{definition}{Definition}
\newtheorem*{definition*}{Definition}
\newtheorem{theorem}{Theorem}
\newtheorem*{theorem*}{Theorem}
\newtheorem{lemma}[theorem]{Lemma}
\newtheorem*{lemma*}{Lemma}
\newtheorem{proposition}[theorem]{Proposition}
\newtheorem{corollary}[theorem]{Corollary}
\newtheorem{remark}[theorem]{Remark}
\newtheorem*{remark*}{Remark}
\newtheorem*{problem*}{Problem}
\numberwithin{equation}{section}
\numberwithin{theorem}{section}
\newcommand{\customlabel}[2]{%
   \protected@write \@auxout {}{\string \newlabel {#1}{{#2}{\thepage}{#2}{#1}{}} }%
   \hypertarget{#1}{#2}
}
\def\XXint#1#2#3{{\setbox0=\hbox{$#1{#2#3}{\int}$}
     \vcenter{\hbox{$#2#3$}}\kern-.5\wd0}}
\newcommand{\dist}{\operatorname{dist}_{\mathbb{H}}}
\newcommand{\Dil}{\operatorname{Dil}}
\begin{document}

\title[Sparse domination and the strong maximal function]{Sparse domination and the strong\\maximal function}

\author{Alex Barron}
\address{Alexander Barron \\
Department of Mathematics, Brown University, 151 Kassar House, Providence, RI, USA.}
\email{abarron@math.brown.edu}

\author[Conde-Alonso]{Jos\'e M. Conde-Alonso}
\address{Jos\'e M. Conde-Alonso\\
Department of Mathematics, Brown University, 151 Kassar House, Providence, RI, USA.
}
\email{jconde@math.brown.edu}

\author{Yumeng Ou}
\address{Yumeng Ou\\
Department of Mathematics, Box B6-230, One Bernard Baruch Way, New York, NY, USA}
\email{yumeng.ou@baruch.cuny.edu}

\author{Guillermo Rey}
\address{Guillermo Rey\\
127 Vincent Hall, 206 Church St. SE Minneapolis, MN, USA}
\email{reyg@umn.edu}

\begin{abstract}
We study the problem of dominating the dyadic strong maximal function by
  $(1,1)$-type sparse forms based on rectangles with sides parallel to the
  axes, and show that such domination is impossible. Our proof relies on an
  explicit construction of a pair of maximally separated point sets with
  respect to an appropriately defined notion of distance.
\end{abstract}

\allowdisplaybreaks

\maketitle


\section{Introduction} \label{section:introduction}

Recent years have seen a great deal of work around the concept of
\textit{sparse domination}. Perhaps the easiest domination result is the one
for the dyadic maximal function
$$
  M_{\mathcal{D}} f(x) = \sup_{Q \ni x} \frac{1}{|Q|}\int_Q |f|,
$$
where the supremum is taken over the family $\mathcal{D}$ of all dyadic cubes in $\mathbb{R}^d$. One can use the Calder\'on-Zygmund decomposition at varying heights to obtain
(for each non-negative locally integrable function $f$) a collection $\mathcal{S}$ of dyadic cubes from the same filtration $\mathcal{D}$ such that
$$
  M_{\mathcal{D}} f \leq C \sum_{Q \in \mathcal{S}} \langle f \rangle_Q \mathbbm{1}_Q,
$$
where $\langle f \rangle_Q = \frac{1}{|Q|}\int_Q f$, and $\mathcal{S}$ is \textit{sparse}:
\begin{definition} \label{definition:Sparse}
  A collection of sets $\mathcal{S}$ is $\eta$-sparse if for all $Q \in \mathcal{S}$ there exists
  a subset $E(Q) \subseteq Q$ such that $|E(Q)| \geq \eta |Q|$ and
  the collection $\{E(Q)\}$ is pairwise disjoint.\footnote{In what follows, we will usually omit the parameter $\eta$ and just say ``sparse" instead of ``$\eta$-sparse".}
\end{definition}
It is worth mentioning that this notion was already used by Sawyer \cite{Sawyer1982} and by Christ-Fefferman \cite{ChristFefferman1983} to prove certain weighted estimates, although the language that they used was different. Moreover, the result readily extends to more general martingale filtrations and a similar estimate allows one to sparsely dominate martingale transforms as well (see \cite{Lacey2017}).

The recent interest in sparse domination results was sparked by Lerner (see for example
\cite{Lerner2010,Lerner2013a,Lerner2013}) in connection with the famous $A_2$ conjecture
as it gave a particularly simple proof of the --now-- theorem
(first settled by Hyt\"onen in \cite{HytonenA2} after numerous efforts by the mathematical community).
Lerner's original result was a \textit{dual} form of sparse domination:
\begin{equation} \label{eq:Lerner}
  \|Tf\|_{\mathbb{X}} \lesssim \sup_{\mathcal{S}} \Bigl\| \sum_{Q \in \mathcal{S}} \langle |f| \rangle_Q \mathbbm{1}_Q \Bigr\|_{\mathbb{X}},
\end{equation}
where the supremum is taken over all sparse collections (with respect to a finite number of
dyadic grids), and $\mathbb{X}$ is any \textit{Banach function space}. 

The number of sparse domination results is by now very large and it would be impossible to give a
detailed survey of all of them in a reasonable amount of text. But let us note that \eqref{eq:Lerner}
has been generalized in many ways, and the typical result is that for every pair of (sufficiently nice) functions
$f$ and $g$ there exists a sparse collection of cubes $\mathcal{S}$ such that
\begin{equation} \label{equation:DualDomination}
  |\langle T f, g \rangle| \lesssim \sum_{Q \in \mathcal{S}}
    \langle |f| \rangle_Q \langle |g| \rangle_{Q} |Q|
\end{equation} or more generally 

\begin{equation} \label{equation:DualDomination2}
|\langle T f, g \rangle| \lesssim \sum_{Q \in \mathcal{S}}
\langle |f| \rangle_{Q,r} \langle |g| \rangle_{Q,s} |Q|,
\end{equation} where $\langle |f| \rangle_{Q,r} = \langle |f|^{r} \rangle_{Q}^{\frac{1}{r}}$ and $ 1 \leq r,s < \infty$. The $(r,s)$ sparse forms of type \eqref{equation:DualDomination2} arise when studying operators that fall outside the scope of classical Calder\'{o}n-Zygmund theory, and which may not satisfy the full range of strong and weak-type $L^{p}$ estimates implied by \eqref{equation:DualDomination}. The operators that have been studied include but are not limited to rough singular integral operators, Bochner-Riesz multipliers, spherical maximal functions, singular integrals along manifolds, pseudodifferential operators, and the bilinear Hilbert transform. 
See for example
\cite{BeltranCladek2017arxiv,
CondeRey2016,
Lacey2017,
Lacey2017arxiv,
BeneaBernicot2017,
LaceyMena2017,
CCPO2017,
CPO2018a,
CPO2018b}. Sparse domination provides a fine quantification of the mapping properties of operators that carries much more information than $L^p$ bounds. In particular, it is a very effective way to obtain sharp quantitative weighted estimates and has also been used to study previously unknown endpoint behaviors (see for instance \cite{KrauseLacey}).  

In the present work we study the problem of sparsely dominating the bi-parameter analogue of the the dyadic maximal function. This operator is the dyadic strong maximal function:

\begin{equation} \label{definition:strongMaximalFunction}
  \mathcal{M}_Sf(x) := \sup_{R \ni x} \langle |f| \rangle_R,
\end{equation}
where the supremum is taken over all dyadic rectangles containing $x$ with sides parallel to the axes. The strong maximal function is one of the most important operators in the theory of multi-parameter singular integrals, associated with which is an underlying non-isotropic dilation structure. This class of operators arise naturally in the theory of summation of multiple Fourier series, several complex variables and certain boundary value problems, and is a first step into the study of operators with more complicated dilation structure (e.g. Zygmund dilations). However, much less is known about these operators compared to the one-parameter case, for example the sharp weighted bound, thus it is natural to ask whether the sparse domination technique can be introduced into the multi-parameter setting to help the study. Unfortunately, this seems to be very difficulty due to the fact that one of the key ingredients in standard proofs of sparse domination, the stopping time argument, is missing in the multi-parameter setting.

We first note that in the study of multi-parameter sparse domination, the natural geometric objects to consider are axes-parallel rectangles instead of cubes, since $\mathcal{M}_S$ can be large on axis-parallel rectangles of arbitrary eccentricity. Then, one observes that the strong maximal function is indeed dominated by sparse forms (based on rectangles) when restricted to a single point mass. Indeed, one can make sense of \eqref{definition:strongMaximalFunction} when applied to finite positive measures, and then it is easy to see that
\begin{equation} \label{MofDelta}
   \mathcal{M}_S(\delta_0)(x,y) \leq \frac{1}{|x||y|}.
\end{equation}
And this function can actually be dominated by a sparse operator: if
we define $I_m = [0,2^m)$ and $J_m = [2^{m-1}, 2^m)$ then
\begin{align*}
  \frac{1}{|x||y|} &= \sum_{m,n \in \mathbb{Z}} \frac{1}{|x||y|} \mathbbm{1}_{J_m \times J_n}(x,y) \\
    &\leq \sum_{m,n \in \mathbb{Z}} 2^{2-m-n} \mathbbm{1}_{J_m \times J_n}(x,y) \\
    &= 4\sum_{m,n \in \mathbb{Z}} \langle \delta_0 \rangle_{I_m \times I_n} \mathbbm{1}_{I_m \times I_n}(x,y).
\end{align*}
Note that the collection $\mathcal{S} = \{I_m \times I_n:\, m,n \in \mathbb{Z}\}$ is sparse since
for every $R = I_m \times I_n$ we can define $E(R) = J_m \times J_n$ and this satisfies the conditions
of Definition \ref{definition:Sparse} with $\eta=1/4$. 

This example is relevant because (approximate) point masses are extremal examples for the weak-type behavior of $\mathcal{M}_S$ and the Hardy-Littlewood maximal functions near the $L^{1}$ endpoint. Moreover, we know from the one-parameter theory that there is a connection between sparse bounds and weak-type endpoint estimates; for example, a $(1,1)$ sparse bound of type \eqref{equation:DualDomination} implies that $T$ maps $L^{1}$ into weak $L^{1}$ (see the appendix in \cite{CCPO2017}). Taking the above discussion into account, it is natural to ask whether or not $\mathcal{M}_{S}$ admits a $(1,1)$ sparse bound of the type \begin{equation} \label{equation:sparseFormM} |\langle \mathcal{M}_S f, g \rangle| \lesssim \sum_{R \in \mathcal{S}} \langle |f| \rangle_R \langle |g| \rangle_R |R|, \end{equation} where the collection $\mathcal{S}$ consists of dyadic axis-parallel rectangles. We also note that there is no immediate contradiction implied by a bound of type \eqref{equation:sparseFormM}; indeed, an estimate of the type $$ |\langle T f, g \rangle| \lesssim \sum_{R \in \mathcal{S}} \langle |f| \rangle_R \langle |g| \rangle_R |R|$$ implies that $T$ maps $L \log L$ into weak $L^{1}$, but does \textit{not} imply that $T$ maps $L^{1}$ into weak $L^{1}$ (the proof is similar to the argument in the appendix of \cite{CCPO2017}).

Our main result answers the question raised above in the negative: there can be no domination by positive sparse forms of
the type \eqref{equation:DualDomination} for the strong maximal function.

\begin{TheoremLetter} \label{MainTheorem}
  For every $C>0$ and $0<\eta<1$ there exist a pair of compactly supported integrable functions $f$ and $g$ such
  that
  $$
    |\langle \mathcal{M}_Sf, g \rangle| \geq C \sum_{R \in \mathcal{S}} \langle |f| \rangle_R \langle |g| \rangle_R |R|,
  $$
  for all $\eta$-sparse collections $\mathcal{S}$ of dyadic rectangles with sides parallel to the axes.
\end{TheoremLetter}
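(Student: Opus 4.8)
The plan is to reduce Theorem~\ref{MainTheorem} to a purely combinatorial statement about finite families of points, prove an upper bound that handles \emph{all} sparse collections simultaneously by passing to the Hilbert space $\mathbb{H}$, and finally construct an explicit pair of point sets that is extremal for the resulting comparison.

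\emph{Step 1: reduction to point masses.} Since (approximate) point masses are the extremal examples for the weak behaviour of $\mathcal{M}_S$, I would take $f=\sum_{i}|B_i|^{-1}\mathbbm{1}_{B_i}$ and $g=\sum_{j}|B_j'|^{-1}\mathbbm{1}_{B_j'}$ with $B_i,B_j'$ small balls centered at finite sets of \emph{distinct dyadic--rational} points $P=\{p_i\}$ and $Q=\{q_j\}$. Letting the radii tend to $0$, the left-hand side converges to $\sum_j \mathcal{M}_S\big(\sum_i \delta_{p_i}\big)(q_j)=\sum_j \sup_{R\ni q_j}\frac{n_P(R)}{|R|}$, where $n_P(R)=\#\{i:p_i\in R\}$; and because the points are distinct dyadic rationals (so sufficiently small dyadic rectangles separate them, and boundary effects vanish) the supremum of the sparse forms over $\eta$-sparse collections is, up to an absolute constant, at most the discrete bilinear quantity $\sup_{\mathcal{S}}\sum_{R\in\mathcal{S}}\frac{n_P(R)\,n_Q(R)}{|R|}$. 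Thus it is enough to produce, for each $C$ and $0<\eta<1$, finite sets $P,Q$ with
$$
  \sum_{j}\ \sup_{R\ni q_j}\frac{n_P(R)}{|R|}\ \geq\ C\ \sup_{\mathcal{S}\ \eta\text{-sparse}}\ \sum_{R\in\mathcal{S}}\frac{n_P(R)\,n_Q(R)}{|R|}.
$$

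\emph{Step 2: one bound for every sparse collection.} The obstacle is that the collection $\mathcal{S}$ is adversarial, so the discrete right-hand side must be controlled uniformly. Here I would encode rectangles-through-a-point as vectors in $\mathbb{H}$: to each point one attaches (a truncation of) its ``cone'' of dyadic rectangles, so that the inner product of the vectors attached to $p$ and $q$ records $\dist$, the multiplicative/rectangular distance between $p$ and $q$, and the operators $\Dil$ implement dyadic rescaling and yield scale invariance of the estimates. Since each $R\in\mathcal{S}$ contributes a rank-one term and the sets $E(R)$ from Definition~\ref{definition:Sparse} are pairwise disjoint, $\eta$-sparsity lets one dominate $\sum_{R\in\mathcal{S}}\frac{n_P(R)\,n_Q(R)}{|R|}$ by a single $\mathbb{H}$-inner product of two fixed vectors depending only on $P$ and $Q$ (not on $\mathcal{S}$), hence by a product of $\norm{\cdot}$-type quantities measuring how ``spread out'' $P$ and $Q$ are. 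The upshot: sparse forms against $(P,Q)$ are uniformly \emph{small} exactly when $P$ and $Q$ are mutually well separated in the $\dist$-metric.

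\emph{Step 3: the maximally separated construction, and conclusion.} It remains to build, for a prescribed target ratio, a pair of point sets that is near-maximally $\dist$-separated (so Step~2 keeps all sparse forms bounded) while still forcing $\mathcal{M}_S$ to be large. I would construct $P$ and $Q$ recursively over $\sim\log N$ dyadic scales: at each stage new points are inserted so that (i) the $\mathbb{H}$-vectors of $P$ and of $Q$ remain nearly orthogonal and of controlled norm, keeping the Step~2 bound in force, and (ii) for each newly inserted $q_j$ there is a \emph{single} highly eccentric dyadic rectangle capturing many already-placed $p_i$, so that $\sup_{R\ni q_j}\frac{n_P(R)}{|R|}$ picks up a full ``unit'' of density, \emph{and} the rectangles realizing these maxima are arranged to overlap so heavily (across incomparable eccentricities) that no $\eta$-sparse collection can recover more than an $O_\eta(1)$ fraction of their total contribution. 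Summing over the $\sim N$ points of $Q$ then makes the left side grow while Step~2 pins down the right side, so the ratio diverges as $N\to\infty$ (with $N$ chosen in terms of $C$ and $\eta$), which is exactly the statement. The main difficulty is Step~3: arranging that near-maximal $\dist$-separation (which annihilates all sparse forms) coexists with persistently high local density for $\mathcal{M}_S$ — this is the explicit ``maximally separated point sets'' construction promised in the abstract, and it is where the non-sparsifiability peculiar to the two-parameter setting is exploited.
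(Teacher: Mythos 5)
Your Step 1 is correct and matches the paper: both reduce to sums of (approximate) point masses, and the paper indeed works with two finite sets $\mathcal{P}$ and $\mathcal{Z}$, proving a lower bound on $\langle\mathcal{M}_S\mu,\nu\rangle$ and a uniform upper bound on sparse forms $\sum_R\langle\mu\rangle_R\langle\nu\rangle_R|R|$. But Steps 2 and 3 diverge sharply and, as written, do not work.

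In Step 2 you have read the macro notation $\dist=\operatorname{dist}_{\mathbb H}$ and $\Dil$ as referring to an actual Hilbert space $\mathbb H$ with cone-vectors and dilations; in the paper $\mathbb H$ is only a decorative subscript, $\dist(p,q)$ is simply the square root of the area of the smallest dyadic rectangle containing both points, and $\Dil$ is an area-preserving dyadic dilation used to enumerate the ``standard rectangles.'' There is no rank-one/Gram-matrix argument, and the claim that $\eta$-sparsity reduces $\sum_{R\in\mathcal S}n_P(R)n_Q(R)/|R|$ to a single inner product independent of $\mathcal S$ is precisely the content that needs proof; nothing obvious forces it. What the paper actually does (Proposition \ref{theorem:UpperBound}) is prove a pointwise bound $\langle\mu\rangle_R\langle\nu\rangle_R\lesssim k(1+2^{2k}/m)(|\overline R|/|R|)^2$, valid for \emph{every} rectangle, by splitting into ``large'' rectangles (where the $2^{-m}$-separation of $\mathcal P$ and a uniform-distribution bound \ref{theorem:ZPoints:UniformDist} on $\mathcal Z$ force equidistribution) and ``small'' rectangles (where $R$ sees exactly one $p\in\mathcal P$ and at most $\lesssim k$ points of $\mathcal Z$ by \ref{theorem:ZPoints:Separation}); then it sums using the Carleson property and the weak-$(1,1)$ estimate for $\mathcal M_S(\mathbbm 1_{[0,1)^2})$. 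Mere $\dist$-separation between $P$ and $Q$ is \emph{not} enough: the paper deliberately places $\mathcal Z$ \emph{close} to $\mathcal P$ (at $\dist^2\sim2^{-2m-k}$), and the upper bound survives only because of the extra structural properties and the two-parameter choice $m=k2^{2k}$.

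In Step 3 your mechanism is quantitatively off. You want $\sup_{R\ni q_j}n_P(R)/|R|$ to pick up ``a full unit of density'' from a rectangle catching many $p_i$'s. But the single-rectangle sparse collection $\{[0,1)^2\}$ already makes the right side at least $\#P\cdot\#Q$ (unnormalized), so you need the left side to exceed $C\,\#P\cdot\#Q$, i.e.\ each $q_j$ must see density much larger than the global average $\#P$, not density $\sim1$. The paper's lower bound (Proposition \ref{theorem:LowerBoundForM}) does not come from a rectangle catching many $p_i$'s; it comes from a \emph{single} nearby point mass $p(z)$ at $\dist(p(z),z)^2\sim2^{-2m-k}$, yielding $\mathcal M_S(\mu)(z)\gtrsim 2^k$ against the trivial average $\sim1$. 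You also do not explain how to build the sets; the construction of $\mathcal P$ (a Hammersley-type low-discrepancy set, built by a delicate two-scale induction using Lemma \ref{lem:dist}) and of $\mathcal Z$ (one point per ``standard rectangle'' chosen via Lemma \ref{lemma:pointFinder2} to avoid all the other $\mathcal P$-staircases) is the core of the proof and cannot be waved through as ``insert points keeping vectors nearly orthogonal.'' Finally, the crucial interplay between the two separate parameters $k\ll m$ and the closing choice $m=k2^{2k}$, without which neither the lower nor the upper bound pinches, is absent.
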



The proof of theorem \ref{MainTheorem} is based on the construction of pairs of
extremal functions for which the sparse bound cannot hold. These extremal
examples take advantage of the behavior of $\mathcal{M}_S$ when applied to sums
of several point masses. Indeed, when we apply $\mathcal{M}_S$ to a point mass we get level
sets that look like dyadic \emph{stairs}, as displayed in Figure
\ref{fig:stairs}. These stairs are sparse, in fact the measure of their union
is proportional to the number of rectangles that form them. We shall describe a
way to place many point masses sufficiently far from each other in such a way
that these stairs are all necessary to dominate $\mathcal{M}_S$, but are packed
too tightly to be sparse.  

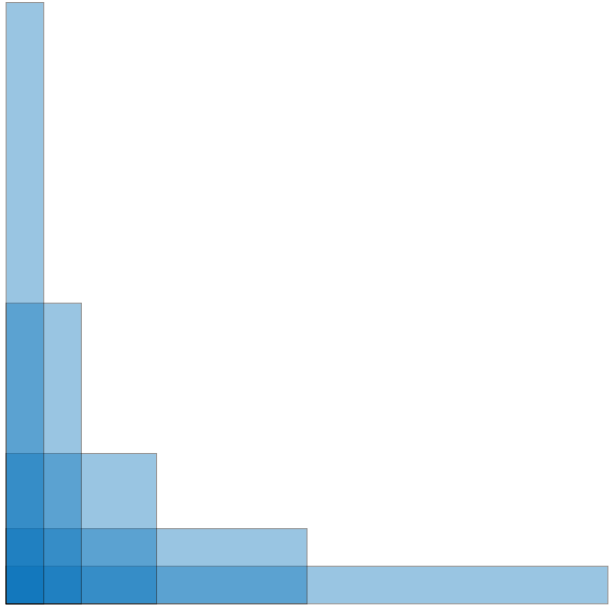
\begin{figure}
\begin{tikzpicture}[scale=0.5]
  \filldraw[NavyBlue, draw=black, opacity=0.4] (0,0) rectangle (16,1);
  \filldraw[NavyBlue, draw=black, opacity=0.4] (0,0) rectangle (8,2);
  \filldraw[NavyBlue, draw=black, opacity=0.4] (0,0) rectangle (4,4);
  \filldraw[NavyBlue, draw=black, opacity=0.4] (0,0) rectangle (2,8);
  \filldraw[NavyBlue, draw=black, opacity=0.4] (0,0) rectangle (1,16);

\end{tikzpicture}
  \caption{Dyadic \emph{stairs}}
  \label{fig:stairs}
\end{figure}

In particular, we shall construct a set of points $\mathcal{P}$ which are
maximally separated with respect to a quantity that captures the biparametric
structure of the problem. Our first extremal function is then the sum of point
masses at the points in $\mathcal{P}$. As we said above, this will guarantee
that $\mathcal{M}_S$ applied to it is uniformly large on the unit square. We
complement the construction of the set $\mathcal{P}$ with another set,
$\mathcal{Z}$, with the following structure: for each point $p \in \mathcal{P}$
there exist a large amount of points $z \in \mathcal{Z}$ that are both near $p$
and far away from all the other points in $\mathcal{P}$. Moreover, the family
of points in $\mathcal{Z}$ associated with a given $p \in \mathcal{P}$ are not
clustered together, but are instead spread out over the boundary of a hyperbolic
ball centered at $p$. Our second extremal function is then the normalized sum of
point masses at the points in $\mathcal{Z}$. We will show that the placement of
the points in $\mathcal{Z}$ implies that if a form given by a family
$\mathcal{S}$ dominates the pair $\langle \mathcal{M}_S f, g \rangle$ then the rectangles in $\mathcal{S}$ need to cover the
portion of the stair centered at $p$ that contains each $z$, for all $z \in
\mathcal{Z}$. But then the resulting collection of rectangles cannot be sparse.


We remark that our result actually extends to the situation in which we take
slightly larger averaged norms of $f$. In particular our methods allow us to push 
Theorem \ref{MainTheorem} to the case of Orlicz $\phi(L)$-norms with
$\phi(x) = x \log(x)^\alpha$ for $\alpha < 1/2$. We carry out this extension in Theorem
\ref{thm:sec5norm}.

We have learned that our construction of $\mathcal{P}$ is closely connected to the fields of discrepancy, combinatorics, and computational learning. In particular it is a special construction of a \emph{low-discrepancy} sequence similar to
those of Hammersley, see for example Definition 3.44 of \cite{DickPillichshammer2010}. These low-discrepancy sequences are also closely connected to the notion of $\varepsilon$-nets, though our point of view is
in a sense opposite to the one usually taken in the theory of $\varepsilon$-nets: we are interested
in \emph{lower} bounds on the cardinality of the intersection of these sets with rectangles.
The notion of $(t,m,s)$-nets is closer to our purposes, see \cite{DickPillichshammer2010}. Our results show that these kind of sets are particularly well-suited to the study of the strong maximal function.
However, the theory of discrepancy and $\varepsilon$-nets is much more general
so it would be very interesting to find further connections between other types of maximal operators and the fields above. In particular Zygmund-type maximal operators (defined like the strong maximal function, but with rectangles with fewer
``parameters'' than the ambient dimension, see
\cite{Cordoba1980,Soria1986,FeffermanPipher2005,Stokolos2008}) could be
amenable to this connection.


The structure of the rest of the paper is the following. In Section
\ref{section:LowerBounds}, we prove lower bounds for $\mathcal{M}_S$ and, in
the opposite direction, we prove an upper bound for sparse forms when acting on
the same extremal functions. We assume the existence of the key sets of points
$\mathcal{P}$ and $\mathcal{Z}$, which are needed to construct the
aforementioned extremal functions. The sets $\mathcal{P}$ and $\mathcal{Z}$ are constructed in detail in Section
\ref{section:ConstructionOfSets}, completing the proof of Theorem
\ref{MainTheorem}. Finally, in Section \ref{section:FurtherRemarks} we show how
our results extend to higher dimensions by a tensor product argument. We also show that the strong maximal function is in a sense
supercritical for $L^1$-sparse forms, which allows us to slightly strengthen
our result.

{\bf Acknowledgements.}  The authors would like to thank Jill Pipher and Dmitriy Bilyk for useful discussions regarding the content of this paper. The third named author is supported by NSF-DMS \#1764454. 


\section{Lower bounds for $\mathcal{M}_S$ and upper bounds for sparse forms} \label{section:LowerBounds}

We start by introducing a quantity which is intimately related with the geometry of the bi-parameter setting: for every pair of points $p,q$ in $[0,1)^2$ we set 
$$
\dist(p,q) = \inf \{|R|^{1/2}:\, R \in \mathcal{D} \times \mathcal{D} \text{ is a dyadic rectangle containing } p \text{ and } q\}.
$$
One can also define a distance between two (dyadic) rectangles:
\[
\dist(R_1,R_2):=\inf_{p\in R_1, q\in R_2}\dist(p,q).
\]
Note that $\dist$ is not a distance function, and not even a quasidistance, as the triangle inequality is completely false in general. We will, however, refer to it as \textit{the distance} between two points. Using this language, the strong maximal function of a point mass \eqref{MofDelta} can now be written as
$$
  \mathcal{M}_S(\delta_0)(p) = \frac{1}{\dist(p,0)^2}.
$$
We are going to study the precise behavior of $\mathcal{M}_S$ when applied to uniform probability measures
concentrated on certain point sets: given a finite set of points $\mathcal{F}$ let
\begin{equation}\label{equation:defmu}
  \mu = \mu_{\mathcal{F}} = \frac{1}{\#(\mathcal{F})} \sum_{p \in \mathcal{F}} \delta_p.
\end{equation}

In general evaluating the strong maximal function of a sum of point masses is a computationally
difficult problem: for every point $p$ one has to consider all possible subsets of $\mathcal{F}$
whose minimal enclosing rectangle contains $p$. However, if we know a priori that $\mathcal{F}$ is
$\varepsilon$-separated (with respect to $\dist$), and that it contains sufficiently many points,
then this forces a certain uniform distribution of $\mu$ at large scales,
reducing the problem to treating only small rectangles. In particular, let $\varepsilon > 0$, suppose that $\mathcal{F}=\mathcal{P}$ is such that $\#(\mathcal{P}) \geq C \varepsilon^{-2}$, and that it is $\varepsilon$-separated:
$$
  \dist(p,q) \geq \varepsilon \quad \forall p \neq q \text{ in } \mathcal{P}.
$$
Then for any rectangle with $|R| \geq \varepsilon^2$ we have
\begin{equation*}
  \langle \mu \rangle_R = \frac{\mu(R)}{|R|} \leq \frac{\#(R \cap \mathcal{P})}{C|R|\varepsilon^{-2}} \lesssim \frac{|R|\varepsilon^{-2}}{C |R| \varepsilon^{-2}} = \frac{1}{C}.
\end{equation*}

If instead $|R| < \varepsilon^2$ then $R$ can contain at most one point from $\mathcal{P}$ and so
for any $p$ for which $\dist(p,\mathcal{P}) \lesssim 1$ we have

\begin{equation}\label{equation:estimateForM}
  M_S(\mu)(p) \sim \frac{1}{\#(\mathcal{P})} \sup_{q \in \mathcal{P}} \frac{1}{\dist(p,q)^2}.
\end{equation}

As we mentioned in the introduction, if one can find another set of points $\mathcal{Z}$ each of which is very close to exactly one point $p$ from $\mathcal{P}$ |and which are not very close to one another, so that they do not form large clusters|, then we can force a contradiction that makes sparse domination fail. The rest of this section is devoted to giving lower bounds for $\mathcal{M}_S$ assuming the existence of such sets of points $\mathcal{P}$ and $\mathcal{Z}$. In Section \ref{section:UpperBounds} we exploit the intuition given above and use it to give upper bounds for sparse forms that, together with the lower bounds from this section, will ultimately yield a contradiction.

The following two theorems give the full description of $\mathcal{P}$ and $\mathcal{Z}$. We postpone their proofs to Section \ref{section:ConstructionOfSets}.

\begin{theorem} \label{theorem:PPoints}
  For every $m \geq 0$ there exists a collection of points $\mathcal{P} \subset [0,1)^2$ such that
  \begin{equation} \label{eq:PPoints:SeparationOfPoints} \tag{P.1}
    \dist(p,q) \geq 2^{-m}, \quad \forall p \neq q \text{ in } \mathcal{P},
  \end{equation}
  and
  \begin{equation} \label{eq:PPoints:NumberOfPoints} \tag{P.2}
    \#(\mathcal{P}) = 2^{2m+1}.
  \end{equation}
\end{theorem}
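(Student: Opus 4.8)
The plan is to produce $\mathcal{P}$ as the image of a explicit bijective coding of the integers $\{0,1,\dots,2^{2m+1}-1\}$ into the unit square, designed so that the dyadic-rectangle distance between any two distinct coded points is bounded below by $2^{-m}$. The key observation is that a dyadic rectangle $R = I \times J$ containing two points $p = (x_1,y_1)$ and $q = (x_2,y_2)$ must have $|I| \geq 2^{-\ell(x_1,x_2)}$ and $|J| \geq 2^{-\ell(y_1,y_2)}$, where $\ell(a,b)$ denotes the length of the longest common dyadic ancestor of $a$ and $b$ in one variable — equivalently $\ell(a,b)$ is (up to rounding) $-\log_2$ of the first dyadic scale at which $a$ and $b$ are separated. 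Hence $\dist(p,q)^2 = \inf |R| \geq 2^{-\ell(x_1,x_2)-\ell(y_1,y_2)}$, and the separation requirement \eqref{eq:PPoints:SeparationOfPoints} becomes the purely combinatorial condition that $\ell(x_1,x_2) + \ell(y_1,y_2) \leq 2m$ for all distinct pairs. So the whole problem reduces to: place $2^{2m+1}$ points in $[0,1)^2$, using only binary expansions truncated at depth $m$ in each coordinate (so there are $2^m \times 2^m$ available cells, and we are choosing $2 \cdot 2^{2m}$ of the $2^{2m}$ cells... wait — $2^{2m+1} = 2\cdot 2^{2m}$, so we use finer expansions, say depth $m+1$, giving $2^{m+1}\times 2^{m+1}$ cells of which we pick a fraction $1/2$), such that if two chosen points agree to depth $j$ horizontally they disagree by depth $m-j$ vertically.

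Concretely, I would index points by $k \in \{0,\dots,2^{2m+1}-1\}$, write $k$ in binary, and split its bits in an interleaved/"bit-reversal" fashion between the $x$-coordinate and the $y$-coordinate — this is exactly the Hammersley / van der Corput radical-inverse construction alluded to in the introduction. The defining feature one needs is: the $x$-coordinate reads off the low-order bits of $k$ in reversed order while the $y$-coordinate reads off (a shifted block of) the high-order bits, so that agreement of the top $j$ bits of the two $x$-coordinates forces the corresponding bits of $k$ to agree, which in turn forces enough bits of $k$ to agree that the $y$-coordinates must already have separated by level $2m-j$. One then verifies the inequality $\ell(x_1,x_2)+\ell(y_1,y_2) \leq 2m$ directly from the bijection, and \eqref{eq:PPoints:NumberOfPoints} holds by construction since the coding is injective on $\{0,\dots,2^{2m+1}-1\}$. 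A small bookkeeping subtlety is that $2^{2m+1}$ is not a perfect square of a power of $2$, so the $x$- and $y$-expansions will have depths differing by one (say $m+1$ and $m$), which only improves the separation bound in one of the two directions and is harmless; I would absorb this asymmetry at the start by declaring, e.g., $x$ to have $m+1$ binary digits and $y$ to have $m$.

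The main obstacle I anticipate is packaging the "interleaving" cleanly enough that the key inequality $\ell(x_1,x_2) + \ell(y_1,y_2) \leq 2m$ falls out without a messy case analysis — in particular handling the interaction between which bits of the index $k$ go to $x$ versus $y$ and the two possible "rounding" conventions for $\ell$ (whether one counts the longest common prefix of the truncated expansions or of the true reals, which matters for dyadic rationals sitting on cell boundaries). I would sidestep the boundary issue entirely by placing each point at the \emph{center} (or a fixed interior node) of its depth-$(m+1)\times$-depth-$m$ cell rather than at a corner, so that all the $\ell$-values are determined by the finite bit-strings and no point is a dyadic rational of low height. Once the coding and the key inequality are in place, \eqref{eq:PPoints:SeparationOfPoints} and \eqref{eq:PPoints:NumberOfPoints} are both immediate, and the proof is complete.
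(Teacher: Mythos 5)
Your reduction of the separation requirement to the combinatorial condition $\ell(x_1,x_2)+\ell(y_1,y_2)\leq 2m$ for all distinct pairs is correct, and identifying a Hammersley--van der Corput point set (equivalently a $(0,2m+1,2)$-net in base $2$) as the right extremal object is a genuinely different and in some ways cleaner route than the paper's, which builds $\mathcal{P}$ by a two-dimensional induction: scale the family from step $m-1$ by $1/2$, place a copy in each of the four quadrants, and then carefully select one first-generation child of each resulting square so that the $2^{-m}$-separation survives across quadrants. However, the concrete bijection you describe does not actually realize the combinatorial condition, and this is a real gap.

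You propose \emph{splitting} the $2m+1$ bits of the index $k$ between the two coordinates (the $x$-coordinate reading off reversed low-order bits, the $y$-coordinate a block of high-order bits) and you explicitly assert that the coordinate expansions then have depths $m+1$ and $m$. This fails: if $x$ depends only on $a_1,\dots,a_{m+1}$ and $y$ only on $a_{m+2},\dots,a_{2m+1}$, then two indices $k\neq k'$ agreeing on the high-order bits produce \emph{identical} $y$-coordinates, so $\ell(y_k,y_{k'})$ is unbounded while $\ell(x_k,x_{k'})$ can still be as large as $m$ (adjacent depth-$(m+1)$ cells), badly violating the inequality. More structurally, any bijection of $\{0,\dots,2^{2m+1}-1\}$ onto a $2^{m+1}\times 2^m$ grid of cells must place two points in the same row, so no such ``split'' can work. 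The actual Hammersley map uses \emph{all} $2m+1$ bits for \emph{both} coordinates: writing $k=\sum_{i=1}^{2m+1}a_i 2^{i-1}$, set $x_k=k/2^{2m+1}=0.a_{2m+1}a_{2m}\cdots a_1$ and $y_k=0.a_1a_2\cdots a_{2m+1}$. Agreement of the first $j$ bits of the $x$-coordinates pins the $j$ highest bits of $k,k'$, agreement of the first $j'$ bits of the $y$-coordinates pins the $j'$ lowest bits, and $j+j'\geq 2m+1$ forces $k=k'$; this is exactly $\ell(x_k,x_{k'})+\ell(y_k,y_{k'})\leq 2m$, and injectivity of $k\mapsto x_k$ gives \eqref{eq:PPoints:NumberOfPoints}. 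The ``depths $m+1$ and $m$'' remark is thus a red herring: any $2^{-m}$-separated set of size $2^{2m+1}$ is forced by pigeonhole to be a $(0,2m+1,2)$-net, hence must have one point in every column of width $2^{-(2m+1)}$, so both coordinates genuinely need resolution $2^{-(2m+1)}$. Your idea of nudging each point to the interior of its cell to sidestep dyadic-rational boundary ambiguities is fine, but must be applied at depth $2m+1$. With these corrections your approach goes through and gives a more explicit construction than the paper's induction.
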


\begin{theorem} \label{theorem:ZPoints}
  For every $k \ll m$ and every $2^{-m}$-separated set $\mathcal{P} \subset [0,1)^2$
  there exists a set $\mathcal{Z} \subset [0,1)^2$ satisfying the following properties:
  \begin{enumerate}[label=\textup{(Z.\arabic*)}]
    \item $\#(\mathcal{Z}) \gtrsim m 2^{2m}$. \label{theorem:ZPoints:Size}
    \item \label{theorem:ZPoints:DistToP} For every $z \in \mathcal{Z}$ there
      exists exactly one point $p(z) \in \mathcal{P}$ such that
      $$
        \dist(p(z),z)^2 < 2^{-2m-1}.
      $$
    \item \label{theorem:ZPoints:DistToPz} For every $z \in \mathcal{Z}$, 
      $$
        \dist(p(z),z)^2 \sim 2^{-2m-k}.
      $$
    \item \label{theorem:ZPoints:Separation} For every dyadic rectangle $R$ with $|R|
      \leq 2^{-2m - 2}$ that intersects $\mathcal{P}$ we have
      $$
        \#(R \cap \mathcal{Z}) \lesssim k.
      $$
   \item \label{theorem:ZPoints:UniformDist} For every dyadic rectangle $R$ with $|R| \geq 2^{-2m-1}$ we have
      $$
        \#(R \cap \mathcal{Z}) \lesssim 2^{2m}mk |R|.
      $$
  \end{enumerate}
  The implied constants are independent of $k$ and $m$.
\end{theorem}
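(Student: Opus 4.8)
The plan is to translate the whole statement into combinatorics of binary expansions. For points $p=(p_1,p_2)$ and $q=(q_1,q_2)$ let $a(p,q)$ (resp. $b(p,q)$) be the largest integer $\ell$ for which $p_1,q_1$ (resp. $p_2,q_2$) lie in a common dyadic interval of length $2^{-\ell}$, so that $\dist(p,q)^2=2^{-a(p,q)-b(p,q)}$. In each coordinate $2^{-a}$ and $2^{-b}$ are ultrametrics, so $a(p,q)\geq\min(a(p,r),a(r,q))$ with equality unless $a(p,r)=a(r,q)$, and likewise for $b$. In this language the hypothesis on $\mathcal{P}$ is just $a(p,q)+b(p,q)\leq 2m$ for all distinct $p,q\in\mathcal{P}$, and each of (Z.1)--(Z.5) becomes an assertion about these depths. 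I would also use the pigeonhole bound underlying Theorem \ref{theorem:PPoints}: any $2^{-m}$-separated set meets a dyadic rectangle $R$ in at most $\lesssim 2^{2m}|R|$ points, hence in at most one point when $|R|<2^{-2m}$.

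For the construction, fix a set $\mathcal{I}$ of $\sim m$ consecutive integers $i$ such that both $i\geq 1$ and $j:=2m+k-i\geq 1$. For each $p\in\mathcal{P}$ and $i\in\mathcal{I}$ I would place exactly one point $z^{(p,i)}$ with $a(p,z^{(p,i)})=i$ and $b(p,z^{(p,i)})=j$, so that $\dist(p,z^{(p,i)})^2=2^{-2m-k}$ exactly. Its first coordinate is then forced to lie in the dyadic interval $S_i(p_1)$ of length $2^{-i-1}$ obtained from the length-$2^{-i}$ dyadic interval of $p_1$ by passing to the sibling of its length-$2^{-i-1}$ half, and symmetrically $z^{(p,i)}_2\in S_j(p_2)$. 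There is still freedom inside $S_i(p_1)\times S_j(p_2)$, and the construction works provided this freedom is spent so that $z^{(p,i)}$ is $\dist$-far from every other point of $\mathcal{P}$ and so that the $z^{(p,i)}$ do not cluster. Set $\mathcal{Z}=\{z^{(p,i)}:p\in\mathcal{P},\ i\in\mathcal{I}\}$.

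The heart of the matter is that such a choice exists. A short case analysis, splitting on whether $a(p,q)=i$ and whether $b(p,q)=j$ and using the per-coordinate ultrametric together with $a(p,q)+b(p,q)\leq 2m$, shows that for $q\neq p$ one automatically has $a(q,z^{(p,i)})+b(q,z^{(p,i)})\leq 2m$, hence $\dist(q,z^{(p,i)})^2\geq 2^{-2m}$, \emph{unless} $a(p,q)=i$ — in which case the obstruction involves only $z^{(p,i)}_1$ — or $b(p,q)=j$ — only $z^{(p,i)}_2$. In the first case a point $q$ with $a(p,q)=i$ and $b(p,q)=2m-i-\ell$ forbids a dyadic sub-interval of $S_i(p_1)$ of relative length $2^{-\ell-1}$; the separation of $\mathcal{P}$ forces there to be at most $2^{\ell}$ such $q$ at each level $\ell$, lying in distinct dyadic sub-intervals of $S_i(p_1)$ of relative length $2^{-\ell}$, and two forbidden intervals coming from levels $\ell<\ell'$ can be nested only when $\ell'\geq\ell+2$. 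Feeding these three facts into a greedy estimate shows the forbidden set never exhausts $S_i(p_1)$ — it always misses a dyadic sub-interval — so a legal $z^{(p,i)}_1$ exists; the second coordinate is symmetric. This establishes (Z.2) and (Z.3) at once, and since distinct $z^{(p,i)}$ are then automatic from (Z.2), property (Z.1) follows from $\#\mathcal{Z}=\#\mathcal{P}\cdot\#\mathcal{I}\sim m\,2^{2m}$.

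Property (Z.4) is then a one-line count: a dyadic $R$ with $|R|\leq 2^{-2m-2}$ that meets $\mathcal{P}$ contains a single $p\in\mathcal{P}$, cannot contain $z^{(p',i)}$ for $p'\neq p$ by (Z.2), and $z^{(p,i)}\in R$ forces the side-exponents $a,b$ of $R$ to obey $a\leq i\leq 2m+k-b$, leaving at most $2m+k-(a+b)+1\leq k-1$ admissible $i$. For (Z.5) one uses the residual placement freedom: if $|R|\geq 2^{-2m-1}$ then, for each fixed $i$, $z^{(p,i)}\in R$ either forces $p\in R$ (contributing $\lesssim 2^{2m}|R|$ by the $\mathcal{P}$-counting bound) or forces one coordinate of $p$ into a fixed dyadic interval together with the corresponding coordinate of $z^{(p,i)}$ into the corresponding side of $R$, which again contributes $\lesssim 2^{2m}|R|$ provided the chosen coordinates of the $z^{(p,i)}$ are kept spread out across the relevant $S$-interval; summing over $\mathcal{I}$ gives (Z.5), with the factor $k$ to spare. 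I expect the main obstacle to be the key lemma of the previous paragraph: the naive union bound fails by a factor $\sim m$, and only the two consequences of separation — few forbidden intervals per scale, and no nesting between consecutive scales — let a greedy packing stop short of covering $S_i(p_1)$; making this bookkeeping precise, and then propagating the leftover freedom into (Z.5), is the real work.
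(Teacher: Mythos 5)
Your reformulation in terms of the per-coordinate depths $a(p,q)$, $b(p,q)$ is a clean way to encode the same construction the paper uses: one $z$-point per $p\in\mathcal{P}$ per intermediate scale $i$, placed in the sibling box $S_i(p_1)\times S_j(p_2)$, which is exactly the paper's $I_{(1)}\times J_{(k)}$ attached to each standard rectangle $R\in\mathcal{E}_p$. Your case analysis showing that $q\neq p$ is automatically far from $z^{(p,i)}$ unless $a(p,q)=i$ (or $b(p,q)=j$) is correct, and it matches the reduction the paper makes before Lemma \ref{lemma:pointFinder2}. Properties \ref{theorem:ZPoints:Size}--\ref{theorem:ZPoints:DistToPz} and \ref{theorem:ZPoints:Separation} then go through as you describe. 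So far the approach is essentially the paper's, translated.

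However, the two places you flag as delicate are exactly where the proposal has genuine gaps, and the ingredients you list do not obviously close them. For the existence of a free position inside $S_i(p_1)$: you correctly derive (a) at each level $\ell$ there are $\lesssim 2^{\ell}$ disjoint forbidden intervals of relative length $2^{-\ell-1}$, one per relative-$2^{-\ell}$ sub-interval, and (b) nesting between levels $\ell<\ell'$ forces $\ell'\geq\ell+2$. But a ``greedy packing'' from (a)+(b) is not carried out, and it is not clear it terminates: (a) allows up to half of $S_i(p_1)$ to be covered at \emph{each} level, so the naive mass count diverges like $m$, and (b) by itself does not prevent level-$\ell+2$ intervals from re-entering the region cleared at level $\ell$. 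The paper's Lemma \ref{lemma.exponentialDecay} proves something strictly stronger than (a)+(b): there is \emph{exactly one} maximal obstruction rectangle per scale, with $\pi_1$-widths halving geometrically, so the maximal obstructions are pairwise disjoint with total length $(1-2^{-(m-\ell)})|I_{(\ell)}|<|I_{(\ell)}|$, leaving an explicit uncovered dyadic interval. That uniqueness per scale is established by an induction using Lemmas \ref{lemma:LocalizePs}--\ref{lemma:findMaximalRectangles}, and it is what makes the gap appear; your weaker facts (a),(b) leave room for a configuration where the forbidden sets tile $S_i(p_1)$, and the argument as written does not rule that out.

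The proof of \ref{theorem:ZPoints:UniformDist} has a similar issue. You reduce to counting, for fixed $i$, the $p$ with $z^{(p,i)}\in R$ but $p\notin R$, and you invoke ``keeping the chosen coordinates of the $z^{(p,i)}$ spread out across the relevant $S$-interval.'' That is an additional design constraint on the placement of $z$ that was not built into the construction and that must be reconciled with the already-overconstrained free-point argument; it is not established that both can be achieved simultaneously. The paper avoids needing any such extra spreading: it shows (again via Lemma \ref{lemma:findMaximalRectangles}) that the obstruction rectangles meeting a fixed $R$ vertically have exponentially increasing heights, hence there are $\lesssim m$ of them, and then Lemma \ref{lemma:Zcard} bounds each one's contribution by $\lesssim k$, giving $\lesssim mk$ directly. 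To repair your argument you would either need to prove a quantitative version of your free-point lemma that leaves enough slack to enforce the ``spread-out'' condition, or replace the counting in \ref{theorem:ZPoints:UniformDist} by the paper's structural bound on the number of obstruction rectangles per scale.
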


\begin{remark} 
	A simple pigeonholing argument shows that if $\mathcal{P}$ is the collection from Theorem \ref{theorem:PPoints} and $R$ is a dyadic rectangle with $|R| \geq 2^{-2m-1}$, then $\#(R \cap \mathcal{P}) = 2^{2m+1}|R|$.  
\end{remark}

\begin{remark} 
Above, we deliberately omitted the precise dependence of $m$ on $k$. It turns out that any $m \geq k +C$ is admissible as one can check from the proofs in Section \ref{section:ConstructionOfSets}. This justifies our choices of $m$ and $k$ in the next subsections.
\end{remark}

\subsection{Lower bound for $\mathcal{M}_S$}

For the rest of this section $k \ll m$ will be some fixed large numbers,
and $(\mathcal{P}, \mathcal{Z})$ will be the sets given by Theorems \ref{theorem:PPoints} and
\ref{theorem:ZPoints}. Also, $\mu=\mu_{\mathcal{P}}$ and $\nu=\nu_{\mathcal{Z}}$ will always denote the associated uniform probability measures introduced in \eqref{equation:defmu}. The proofs of Theorems \ref{theorem:ZPoints} and \ref{theorem:ZPoints} in the next section show that $\mu$ and $\nu$ can be arbitrarily well-approximated by $L^{1}$ functions, in particular because we will be able to choose the points from $\mathcal{P}$ and $\mathcal{Z}$ in small cubes. Because of this we will prove Theorem \ref{MainTheorem} with $f$ and $g$ replaced by the measures $\mu$ and $\nu$, respectively, and the full result can then be recovered by a simple limiting argument. 

\begin{proposition} \label{theorem:LowerBoundForM}
  Under these conditions we have
  \begin{equation}
    \langle \mathcal{M}_S(\mu), \nu \rangle \gtrsim 2^k.
  \end{equation}
\end{proposition}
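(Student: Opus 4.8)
The plan is to use the pointwise lower bound \eqref{equation:estimateForM} for $\mathcal{M}_S(\mu)$ and integrate it against $\nu$. First I would check that the hypotheses of \eqref{equation:estimateForM} apply: by \eqref{eq:PPoints:SeparationOfPoints} and \eqref{eq:PPoints:NumberOfPoints}, the set $\mathcal{P}$ is $\varepsilon$-separated with $\varepsilon = 2^{-m}$ and has $\#(\mathcal{P}) = 2^{2m+1} \geq C\varepsilon^{-2}$, so for every $p$ with $\dist(p,\mathcal{P}) \lesssim 1$ we have
\[
  \mathcal{M}_S(\mu)(p) \gtrsim \frac{1}{\#(\mathcal{P})} \sup_{q \in \mathcal{P}} \frac{1}{\dist(p,q)^2} = \frac{1}{2^{2m+1}} \sup_{q \in \mathcal{P}} \frac{1}{\dist(p,q)^2}.
\]
Since $\mathcal{Z} \subset [0,1)^2$ and every $z \in \mathcal{Z}$ has a point $p(z) \in \mathcal{P}$ nearby by \ref{theorem:ZPoints:DistToP}, the condition $\dist(z,\mathcal{P}) \lesssim 1$ holds for all $z \in \mathcal{Z}$, so this bound is valid $\nu$-a.e.

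Next I would restrict the supremum to the single term $q = p(z)$, which is the whole point of the construction of $\mathcal{Z}$: for $z \in \mathcal{Z}$,
\[
  \mathcal{M}_S(\mu)(z) \gtrsim \frac{1}{2^{2m+1}} \cdot \frac{1}{\dist(p(z),z)^2}.
\]
By property \ref{theorem:ZPoints:DistToPz} we have $\dist(p(z),z)^2 \sim 2^{-2m-k}$, hence
\[
  \mathcal{M}_S(\mu)(z) \gtrsim \frac{1}{2^{2m+1}} \cdot \frac{1}{2^{-2m-k}} = \frac{2^{2m+k}}{2^{2m+1}} \sim 2^k.
\]
Integrating against the probability measure $\nu$ then yields $\langle \mathcal{M}_S(\mu), \nu \rangle = \int \mathcal{M}_S(\mu)\, d\nu \gtrsim 2^k \cdot \nu([0,1)^2) = 2^k$, as claimed.

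The only genuinely delicate point is justifying that \eqref{equation:estimateForM} really gives a lower bound $\nu$-a.e.\ and that the "$\lesssim 1$" hypothesis on $\dist(z,\mathcal{P})$ is uniformly satisfied; the estimate \eqref{equation:estimateForM} as stated is an equivalence "$\sim$", and its proof in the text splits rectangles into the large-scale case $|R| \geq \varepsilon^2$ (where the $\varepsilon$-separation forces $\langle \mu \rangle_R \lesssim 1/C$) and the small-scale case $|R| < \varepsilon^2$ (where $R$ contains at most one point of $\mathcal{P}$, so the supremum in $\mathcal{M}_S(\mu)$ reduces to the single-point-mass computation $\mathcal{M}_S(\delta_q)(p) = \dist(p,q)^{-2}$ divided by $\#(\mathcal{P})$). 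I would simply record that the lower-bound half of this dichotomy is all we need here, and that it holds at every $z \in \mathcal{Z}$ because $\dist(p(z),z)^2 < 2^{-2m-1} = \varepsilon^2/2 < \varepsilon^2$ by \ref{theorem:ZPoints:DistToP}, so the relevant small rectangle witnessing the large value of $\mathcal{M}_S(\delta_{p(z)})(z)$ is automatically in the small-scale regime where only $p(z)$ contributes. Everything else is a one-line substitution of the numerology from Theorems \ref{theorem:PPoints} and \ref{theorem:ZPoints}.
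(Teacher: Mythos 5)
Your proof is correct and follows essentially the same route as the paper: lower-bound $\mathcal{M}_S(\mu)(z)$ at each $z\in\mathcal{Z}$ by the contribution of the single nearby point mass $p(z)$ (which, as you note, only uses the trivial lower half of \eqref{equation:estimateForM}, i.e.\ the positivity of $\mathcal{M}_S$), plug in the numerology from \ref{theorem:ZPoints:DistToPz} and \eqref{eq:PPoints:NumberOfPoints} to get $\mathcal{M}_S(\mu)(z)\gtrsim 2^k$, then integrate against the probability measure $\nu$. Your extra remarks about why the small-scale regime of \eqref{equation:estimateForM} applies are a harmless elaboration of the paper's terse ``positivity makes the proof almost trivial.''
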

\begin{proof}
  The positivity of $\mathcal{M}_S$ makes the proof almost trivial since we do not need to care about
  possible interactions among the points in $\mathcal{P}$. In particular
  for all $z \in \mathcal{Z}$ let $p(z)$ be the point guaranteed by \ref{theorem:ZPoints:DistToP} of Theorem \ref{theorem:ZPoints}.
  Then, by \eqref{equation:estimateForM} and \eqref{theorem:ZPoints:DistToPz},
  $$
    \mathcal{M}_S(\mu)(z) \geq \frac{1}{\#(\mathcal{P})\dist(p(z),z)^2} \sim \frac{2^{2m+k}}{\#(\mathcal{P})}.
  $$
  Now by \eqref{eq:PPoints:NumberOfPoints} of Theorem \ref{theorem:PPoints},
  $$
    M_S(\mu)(z) \gtrsim 2^k
  $$
  and the claim follows from the fact that $\nu$ is a uniform probability measure over the set $\mathcal{Z}$.
\end{proof}


\subsection{Upper bound for sparse forms} \label{section:UpperBounds}

We now prove upper bounds for sparse forms when acting on $\mu$ and $\nu$. Our results here are written in the language of Carleson sequences:

\begin{definition*}
  Let $\alpha$ be a non-negative function defined on all rectangles that is zero for all
  but a finite collection of rectangles. We say that $\alpha$ is $\Lambda$-Carleson if for all
  open sets $\Omega$ we have
  $$
    \sum_{R \subseteq \Omega} \alpha_R |R| \leq \Lambda |\Omega|,
  $$
  where the sum is taken over all rectangles contained in $\Omega$.
\end{definition*}
We call a collection of dyadic rectangles $\mathcal{S}$ a $\Lambda$-Carleson collection if the sequence
$$
  \alpha_R =
    \begin{cases}
      1 \quad \text{ if } R \in \mathcal{S}, \\
      0 \quad \text{ otherwise}
    \end{cases}
$$
is a $\Lambda$-Carleson sequence. The notions of sparse and Carleson collections are equivalent: a $\Lambda$-Carleson collection is $\Lambda^{-1}$-sparse and
vice-versa, as was shown in \cite{Hanninen2017arxiv} (see also \cite{Dor1975}).

\begin{proposition} \label{theorem:UpperBound}
  For every $\Lambda$-Carleson collection $\mathcal{S}$ and all $k$ and $m$ we have
  \begin{equation}
    \sum_{R \in \mathcal{S}} \langle \mu \rangle_R \langle \nu \rangle_R|R|
      \lesssim \Lambda k\Bigl( 1 + \frac{2^{2k}}{m} \Bigr).
  \end{equation}
\end{proposition}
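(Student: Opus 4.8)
The plan is to split the sum $\sum_{R\in\mathcal S}\langle\mu\rangle_R\langle\nu\rangle_R|R|$ according to the size of $R$, using the scale $|R| \sim 2^{-2m}$ as the threshold dictated by the separation of $\mathcal P$. For \emph{large} rectangles, say $|R|\geq 2^{-2m-1}$, I would use the uniform-distribution estimates: by the remark after Theorem \ref{theorem:ZPoints} we have $\langle\mu\rangle_R = \#(R\cap\mathcal P)/(2^{2m+1}|R|)\lesssim 1$ when $\#(\mathcal P)=2^{2m+1}\gtrsim 2^{2m}$ (this is exactly the $\varepsilon$-separated computation from Section \ref{section:LowerBounds}), and by \ref{theorem:ZPoints:UniformDist} we have $\langle\nu\rangle_R = \#(R\cap\mathcal Z)/(\#(\mathcal Z)|R|)\lesssim 2^{2m}mk/\#(\mathcal Z)\lesssim 2^{2m}mk/(m2^{2m}) = k$. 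Wait — that bound alone only gives $\langle\nu\rangle_R\lesssim k$, not something summable, so I actually want the sharper route: $\langle\mu\rangle_R\langle\nu\rangle_R|R| \lesssim \langle\nu\rangle_R|R| = \#(R\cap\mathcal Z)/\#(\mathcal Z)$, so that the large-rectangle contribution is controlled using the Carleson property applied to $\nu$. The cleanest way to package this: if $\mathcal S$ is $\Lambda$-Carleson then for any measure $\sigma$, $\sum_{R\in\mathcal S}\sigma(R)$ can fail to be bounded, so instead I sum $\langle\mu\rangle_R\langle\nu\rangle_R|R|$ directly and use that $\langle\mu\rangle_R\lesssim 1$ on large rectangles to reduce to $\sum_{|R|\text{ large}}\langle\nu\rangle_R|R|$, which by Carlesonness and the fact that $\nu$ is a probability measure supported on a set that (by \ref{theorem:ZPoints:UniformDist}) has bounded density at scale $\geq 2^{-2m}$, gives a bound of order $\Lambda$ times $(2^{2m}mk)\cdot$(measure-theoretic loss). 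Let me reorganize.

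More carefully: for \emph{large} $R$, $\langle\mu\rangle_R\lesssim 1$ and $\langle\nu\rangle_R \lesssim 2^{2m}mk/\#(\mathcal Z) \lesssim k$ by \ref{theorem:ZPoints:UniformDist} and \ref{theorem:ZPoints:Size}; hence $\langle\mu\rangle_R\langle\nu\rangle_R|R|\lesssim \langle\mu\rangle_R\cdot k\cdot|R|$, wait that still needs $\sum_{R\in\mathcal S}\langle\mu\rangle_R|R|\lesssim \Lambda$. That last fact \emph{is} true: $\sum_{R\in\mathcal S}\langle\mu\rangle_R|R| = \sum_{R\in\mathcal S}\mu(R) \le \sum_{p\in\mathcal P}\#\{R\in\mathcal S: p\in R\}/\#(\mathcal P)$, and a $\Lambda$-Carleson collection has $\sum_{R\ni p}1 \lesssim \Lambda\log(\text{diam})$ — no, that is false too. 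The honest estimate is $\sum_{R\in\mathcal S}\mu(R)\le \|\mu\|\cdot\sup_p\#\{R\in\mathcal S:p\in R\}$, which is not bounded. So I must be smarter: on large $R$ I use \emph{both} $\langle\mu\rangle_R\lesssim 1$ \emph{and} the Carleson property in the form $\sum_{R\subseteq\Omega}|R|\le\Lambda|\Omega|$ against $\Omega = \{M_S\mathbbm 1_{\mathcal Z}\text{-type set}\}$. The right move: $\sum_{R\in\mathcal S,\,R\text{ large}}\langle\mu\rangle_R\langle\nu\rangle_R|R| \lesssim \sum_R \langle\nu\rangle_R|R| \cdot\langle\mu\rangle_R$ and one stratifies over $\langle\nu\rangle_R\in[2^{-j-1},2^{-j}]$; the set $\Omega_j=\{x:M_S^{\mathcal D}\nu(x)>2^{-j}\}$ contains every such $R$, has $|\Omega_j|\lesssim 2^j\|\nu\|_{?}$ by the weak-type bound for the dyadic strong maximal function at the relevant $L\log L$ level, and $\sum_{R\subseteq\Omega_j,R\in\mathcal S}|R|\le\Lambda|\Omega_j|$; summing the geometric-in-$j$ series with the $k$-loss from \ref{theorem:ZPoints:UniformDist} produces the $\Lambda k(1+2^{2k}/m)$-type bound. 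This Carleson-embedding / stratification step is the main obstacle and the technical heart of the proposition.

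For \emph{small} $R$, say $|R|<2^{-2m-1}$: such an $R$ contains at most one point of $\mathcal P$, so $\langle\mu\rangle_R = 1/(\#(\mathcal P)|R|)$ if $R\cap\mathcal P\neq\emptyset$ and $0$ otherwise; thus only rectangles meeting $\mathcal P$ contribute, and $\langle\mu\rangle_R\langle\nu\rangle_R|R| = \#(R\cap\mathcal Z)/(\#(\mathcal P)\#(\mathcal Z))$. Now \ref{theorem:ZPoints:Separation} bounds $\#(R\cap\mathcal Z)\lesssim k$ for $|R|\le 2^{-2m-2}$, and for each fixed $p\in\mathcal P$ the rectangles $R\in\mathcal S$ with $R\ni$(the $\mathcal Z$-points near $p$) and $R$ small are themselves a $\Lambda$-Carleson family inside a small square, so $\sum$ of their measures is $\lesssim\Lambda\cdot 2^{-2m}$ times a $\log$-type or density factor; combined with $\#(\mathcal P)=2^{2m+1}$ and $\#(\mathcal Z)\gtrsim m2^{2m}$ the small-rectangle contribution comes out as $\lesssim \Lambda k\cdot 2^{2k}/m$ (the $2^{2k}$ entering through \ref{theorem:ZPoints:DistToPz}, which fixes the scale $\dist(p,z)^2\sim 2^{-2m-k}$ at which the stairs near $p$ live, hence the number of relevant dyadic scales). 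Adding the large- and small-rectangle estimates yields $\sum_{R\in\mathcal S}\langle\mu\rangle_R\langle\nu\rangle_R|R|\lesssim \Lambda k(1+2^{2k}/m)$, as claimed. I expect the bookkeeping of exactly which $L(\log L)^{?}$ weak-type bound and which Carleson stratification is needed in the large-rectangle regime to be where all the care goes; the small-rectangle regime is essentially a direct count against \ref{theorem:ZPoints:Separation}.
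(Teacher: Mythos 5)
Your broad plan (large/small split plus a stratified Carleson embedding against level sets of $\mathcal{M}_S$ applied to something) is the right shape, but the stratification you propose is over the wrong quantity, and this creates a genuine gap precisely at the spot you flag as ``the technical heart.'' You stratify over $\langle\nu\rangle_R$ and compare against $\Omega_j = \{\mathcal{M}_S\nu > 2^{-j}\}$. There are two problems with this. First, $\nu$ is a singular measure, so there is no clean weak-type $L\log L$ bound for $\mathcal{M}_S\nu$ (indeed $|\{\mathcal{M}_S\delta_0 > \lambda\}| = \infty$ for every $\lambda > 0$, so the sets you need to be small are not even finite). Second, and more fundamentally, even if one granted $|\Omega_j|\lesssim j2^j$, the per-rectangle decay you extract is only one power of $2^{-j}$ (from $\langle\nu\rangle_R\lesssim 2^{-j}$), while $\langle\mu\rangle_R\lesssim 1$ contributes nothing; the resulting series $\sum_j 2^{-j}\cdot\Lambda|\Omega_j|\lesssim\Lambda\sum_j j$ diverges. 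The $L\log L$ loss (the factor $j$ in $|\Omega_j|\lesssim j2^j$) must be beaten, and one power of $2^{-j}$ is not enough.

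The missing idea is to stratify not over $\langle\nu\rangle_R$ but over the ratio $|\overline{R}|/|R|$ where $\overline{R} = R\cap[0,1)^2$, and to compare against $\Omega_j = \{\mathcal{M}_S\mathbbm{1}_{[0,1)^2}\gtrsim 2^{-j}\}$. Because \emph{both} $\mu$ and $\nu$ are supported in the unit square, each of $\langle\mu\rangle_R$ and $\langle\nu\rangle_R$ separately carries a factor of $|\overline{R}|/|R|\sim 2^{-j}$ once you convert averages over $R$ to averages over $\overline{R}$; this yields the pointwise bound $\langle\mu\rangle_R\langle\nu\rangle_R\lesssim k\bigl(1+2^{2k}/m\bigr)(|\overline{R}|/|R|)^2$, i.e.\ a \emph{quadratic} decay $2^{-2j}$. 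Since $\mathbbm{1}_{[0,1)^2}$ is a bounded indicator, the weak $L\log L$ bound for $\mathcal{M}_S$ legitimately gives $|\Omega_j|\lesssim j2^j$, and the Carleson condition plus the quadratic decay produce the convergent series $\sum_j 2^{-2j}\Lambda j2^j = \Lambda\sum_j j2^{-j}\lesssim\Lambda$. Your large-rectangle estimate $\langle\nu\rangle_R\lesssim k$ is the degenerate case $|\overline{R}|\sim|R|$ of the correct bound $\langle\nu\rangle_R\lesssim k\,|\overline{R}|/|R|$; keeping the ratio is what makes everything sum. Relatedly, the paper's large/small split is made in terms of $|\overline{R}|$ rather than $|R|$, which is needed so that the per-rectangle pointwise bound holds uniformly across both regimes and a single stratification then finishes the proof; your separate treatment of small rectangles is in the right spirit (correctly isolating a unique $p$, using \ref{theorem:ZPoints:Separation} for $\#(R\cap\mathcal{Z})\lesssim k$ and \ref{theorem:ZPoints:DistToPz} for the lower bound $|\overline{R}|\gtrsim 2^{-2m-k}$), but folding both cases into the one pointwise inequality and then doing a single stratification is what makes the argument close cleanly.
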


\begin{proof}
For any rectangle $R$ let $\overline{R}$ be the intersection of $R$ with the unit square $[0,1)^2$.
We will show that
\begin{equation} \label{eq:ProductOfAverages}
  \langle \mu \rangle_R \langle \nu \rangle_R \lesssim k \Bigl(1 + \frac{2^{2k}}{m}\Bigr)
  \Bigl( \frac{|\overline{R}|}{|R|}\Bigr)^2.
\end{equation}

Assume first that $R$ is \emph{large}, i.e.: $|\overline{R}| \geq 2^{-2m-1}$.
Then by the $2^{-m}$-separation of $\mathcal{P}$
$$
  \langle \mu \rangle_R = \frac{1}{2^{2m+1}} \frac{\#(\mathcal{P} \cap \overline{R})}{|R|}  = \frac{|\overline{R}|}{|R|}.
$$

Similarly, for $\nu$ we have by \ref{theorem:ZPoints:UniformDist}:
$$
  \langle \nu \rangle_R \lesssim k\frac{|\overline{R}|}{|R|}.
$$

Therefore, for large rectangles we have
$$
  \langle \mu \rangle_R \langle \nu \rangle_R \lesssim k\Bigl( \frac{|\overline{R}|}{|R|}\Bigr)^2.
$$

If instead $R$ is \emph{small}, i.e.: $|\overline{R}| < 2^{-2m-1}$, then we have to be more careful.
If
$$
  \langle \mu \rangle_R \langle \nu \rangle_R \neq 0
$$
then $\overline{R}$ must contain at least one point $p \in \mathcal{P}$.
In fact, since $\overline{R}$ is sufficiently small, there exists exactly one
$p \in \mathcal{P} \cap \overline{R}$.
Hence
$$
  \langle \mu \rangle_{\overline{R}} \lesssim \frac{1}{2^{2m}|\overline{R}|}.
$$
Since $\langle \nu \rangle_R \neq 0$ the rectangle $\overline{R}$ must contain
at least one point from $\mathcal{Z}$.
This implies a lower bound on the size of $\overline{R}$. To see this observe that if $z$ is any point
in $R \cap \mathcal{Z}$, then by the smallness of $R$ and \ref{theorem:ZPoints:DistToP} we must have
$p = p(z)$, and hence $\dist(p,z) \sim 2^{-2m-k}$ by \ref{theorem:ZPoints:DistToPz}, therefore
$$
  |\overline{R}| \gtrsim 2^{-2m-k}.
$$

Now, by \ref{theorem:ZPoints:Separation} we have
$$
  \#(\overline{R} \cap \mathcal{Z}) \lesssim k,
$$
so
$$
  \langle \mu \rangle_{\overline{R}} \lesssim 2^k
  \quad \text{and} \quad
  \langle \nu \rangle_{\overline{R}} \lesssim \frac{k2^k}{m},
$$
from which inequality \eqref{eq:ProductOfAverages} follows.

Now we can finish the proof by splitting $\mathcal{S}$ into families $\mathcal{S}_j$ as follows:
$$
  \mathcal{S}_j = \{R \in \mathcal{S}: \, 2^{-j-1}|R| \leq |\overline{R}| < 2^{-j} |R|\}.
$$
Note that the rectangles in $\mathcal{S}_j$ are contained in
$$
  \Omega_j = \{p \in \mathbb{R}^2:\, M_S(\mathbbm{1}_{[0,1)^2}) \gtrsim 2^{-j}\},
$$
which, by the weak-type boundedness of the strong maximal function, satisfies
\begin{equation} \label{eq:sizeOfOmegaj}
|\Omega_j| \lesssim j2^j.
\end{equation}

Then by estimate \eqref{eq:ProductOfAverages}, the Carleson condition, and \eqref{eq:sizeOfOmegaj},
\begin{align*}
  \sum_{R \in \mathcal{S}} \langle \mu \rangle_R \langle \nu \rangle_R |R| 
    &=  \sum_{j=0}^\infty \sum_{R \in \mathcal{S}_j} \langle \mu \rangle_R \langle \nu \rangle_R |R| \\
    &\lesssim k\Bigl( 1 + \frac{2^{2k}}{m} \Bigr) \sum_{j=0}^\infty 2^{-2j} \sum_{R \in \mathcal{S}_j} |R| \\
    &\leq k\Bigl( 1 + \frac{2^{2k}}{m} \Bigr) \sum_{j=0}^\infty 2^{-2j} \Lambda |\Omega_j| \\
    &\lesssim \Lambda k\Bigl( 1 + \frac{2^{2k}}{m} \Bigr) \sum_{j=0}^\infty j 2^{-j} \\
    &\lesssim \Lambda k\Bigl( 1 + \frac{2^{2k}}{m} \Bigr).
\end{align*}
\end{proof}

\begin{remark}
An examination of the proof above shows that we do not use the full power of the Carleson condition. In particular, the collection $\mathcal{S}$ can be assumed to satisfy the $\Lambda$-Carleson packing condition with respect to the unit  
cube and the level sets $\Omega_j$, but it does not need to be $\Lambda$-Carleson with respect to any other open sets (and in particular need not be $\Lambda$-Carleson at small scales). 
\end{remark}

We can now formally conclude the proof of Theorem \ref{MainTheorem}
conditionally on Theorems \ref{theorem:PPoints} and \ref{theorem:ZPoints}.

\begin{proof}[Proof of Theorem \ref{MainTheorem}]
  Choose $m = k2^{2k}$. Proposition \ref{theorem:LowerBoundForM} implies that
  $$
    \langle M_S(\mu), \nu\rangle \gtrsim 2^k,
  $$
  so Proposition \ref{theorem:UpperBound} forces $\Lambda \to \infty$ if we make $k \to \infty$, which leads to a contradiction. 
\end{proof}


\section{Construction of the extremal sets} \label{section:ConstructionOfSets}

In this section, we will sometimes need to work with the projections of a rectangle $R$ onto the axes. To that end, if $R=I \times J$ we denote
$$
\pi_1(R):= I, \;\; \pi_2(R):=J.
$$

\noindent For dyadic intervals $I$ we let $\widehat{I}$ denote the dyadic parent of $I$. We also let $I^{(j)}$ denote the $j$-fold dyadic dilation of $I$, so that $I^{(j)}$ is the dyadic interval containing $I$ with $|I| = 2^{j}|I|$. 

\subsection{Construction of $\mathcal{P}$ and the proof of Theorem \ref{theorem:PPoints}} The following observation will be useful in the construction:

\begin{lemma}\label{lem:dist}
Let $R_1=I_1\times J_1, R_2=I_2\times J_2$ be two dyadic rectangles such that $I_1\cap I_2=J_1\cap J_2=\emptyset$. Then
\[
 \dist(p_1,p_2)=\dist(q_1,q_2),\quad \forall p_1,q_1\in R_1,\,p_2,q_2\in R_2.
\]
\end{lemma}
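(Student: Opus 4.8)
The plan is to reduce the statement to the one-dimensional structure of the dyadic grid. I would start from the basic dichotomy that any two dyadic intervals are either nested or disjoint; this shows that the dyadic intervals containing a fixed point form a chain under inclusion, and hence that for any two points $x, x'$ of the line the dyadic intervals containing both of them also form a chain. Since everything here lives inside $[0,1)$, that chain is nonempty and has a smallest element, which I will call the minimal dyadic interval through $x$ and $x'$.

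The first real step would be to prove that if $I_1$ and $I_2$ are disjoint dyadic intervals, then for every $x_1 \in I_1$ and $x_2 \in I_2$ the minimal dyadic interval through $x_1$ and $x_2$ is the same, namely the minimal dyadic interval $I_1 \vee I_2$ containing $I_1 \cup I_2$. The argument: if $K$ denotes the minimal dyadic interval through $x_1$ and $x_2$, then $K$ contains $x_1 \in I_1$, so by the dichotomy either $K \subseteq I_1$ or $I_1 \subseteq K$; the first case is impossible, since it would force $x_2 \in K \subseteq I_1$, contradicting $I_1 \cap I_2 = \emptyset$. Hence $I_1 \subseteq K$, and by symmetry $I_2 \subseteq K$, so $K \supseteq I_1 \cup I_2$. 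Conversely, any dyadic interval containing $I_1 \cup I_2$ contains $x_1$ and $x_2$, hence contains $K$; so $K = I_1 \vee I_2$, which depends only on $I_1$ and $I_2$.

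The second step is to lift this to rectangles: a dyadic rectangle $K \times L$ contains the points $p = (x,y)$ and $p' = (x',y')$ precisely when $K$ contains $x, x'$ and $L$ contains $y, y'$, and $|K \times L|^{1/2} = |K|^{1/2}|L|^{1/2}$ is minimized by choosing $K$ and $L$ minimal independently in each coordinate. So for $p_1 = (x_1,y_1) \in R_1$ and $p_2 = (x_2,y_2) \in R_2$ I would conclude, applying the first step in each coordinate and using $I_1 \cap I_2 = J_1 \cap J_2 = \emptyset$,
$$
\dist(p_1,p_2) = |I_1 \vee I_2|^{1/2}\,|J_1 \vee J_2|^{1/2}.
$$
The right-hand side does not involve the particular points chosen, which is exactly the claim $\dist(p_1,p_2) = \dist(q_1,q_2)$.

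I do not anticipate a genuine obstacle here; the one thing to check carefully is that the infimum defining $\dist$ is taken over a nonempty family of rectangles, which holds because all the points lie in $[0,1)^2$, so $[0,1)\times[0,1)$ is always an admissible dyadic rectangle and the relevant chains of dyadic intervals are nonempty.
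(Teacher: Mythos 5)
Your proof is correct and rests on the same key fact as the paper's own argument: the nested-or-disjoint dichotomy for dyadic intervals forces any dyadic interval meeting both $I_1$ and $I_2$ to contain both, and likewise in the $J$-coordinate. The paper phrases this as \emph{every dyadic rectangle containing $p_1$ and $p_2$ must contain $R_1$ and $R_2$}, whereas you go one step further and identify the minimizer explicitly, $\dist(p_1,p_2)=\bigl(|I_1\vee I_2|\,|J_1\vee J_2|\bigr)^{1/2}$; the two formulations are equivalent.
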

\begin{proof}
  Given any $p_1\in R_1$, $p_2\in R_2$, it suffices to show that any dyadic rectangle $R=I\times J$ containing both $p_1$ and $p_2$ needs to contain $R_1$ and $R_2$. Since $I\cap I_i\neq\emptyset$, $i=1,2$, and $I_1\cap
  I_2=\emptyset$, one has $I\supset I_i$, $i=1,2$. The same holds for $J$: $J\supset J_i$ for $i=1,2$.
\end{proof}

\begin{proof}[Proof of theorem \ref{theorem:PPoints}] We are going to show the following claim by induction: for every non-negative integer $m$ there exist $2^{2m+1}$ dyadic squares $Q_1^m, \dots, Q_{2^{2m+1}}^m$ in $[0,1)^2$ satisfying
  \begin{enumerate}
    \item $\ell(Q_i^m) = 2^{-2m-1}$ for all $i$,
    \item $\dist(Q_i^m, Q_j^m) \geq 2^{-m}$ for all $i \neq j$ \label{distance}.
  \end{enumerate}
 Assuming the claim, it is enough to take
 $$
 \mathcal{P}= \left\{p_Q: Q \in \left\{Q_1^m, \dots, Q_{2^{2m+1}}^m\right\}\right\},
 $$
where $p_Q$ denotes the center of the cube $Q$ |in fact, any point of $Q$ would work|. Then, by Lemma \ref{lem:dist} we immediately get \eqref{eq:PPoints:SeparationOfPoints} and \eqref{eq:PPoints:NumberOfPoints} and we end the proof.

We turn to the proof of the claim. The case $m=0$ is easy: it suffices to take $Q_1^0=[0,1/2)^2$ and $Q_2^0=[1/2,1)^2$. Assume now by induction that the theorem is true for $m-1$. Scale the family obtained in that step by $1/2$ in both dimensions and place a translated copy of the result in each square $Q \in \mathcal{D}_1([0,1)^2)$. Denote the cubes so constructed by $P_1^Q, \dots, P_{2^{2m-1}}^Q$. By the dilation invariance of $\dist$ we have
  \[
    \ell(P_i^Q) = 2^{-1} \cdot 2^{-2(m-1)-1} = 2^{-2m}.
  \]
Therefore, it is enough to choose a first-generation child from each of these squares $\{P_i^Q\}_{i,Q}$ in such a way that \eqref{distance} is satisfied. Write $\mathcal{D}_1([0,1)^2)=\{Q_0^0, Q_0^1, Q_1^0, Q_1^1\}$, where the children are listed in the order of upper left, upper right, lower left, and lower right. We first consider $Q_0^0$ and $Q_1^1$. For each $P_i^Q$ with $Q \in \{Q_0^0, Q_1^1\}$ we choose an arbitrary first-generation child and call it $Q_i^Q$. By induction we have
  \[
    \dist(Q_i^Q, Q_j^Q) \geq \dist(P_i^Q, P_j^Q)\geq 2^{-1} \cdot 2^{-(m-1)}
      = 2^{-m} \quad \text{for all } i \neq j,
  \]
  while the distance between any square in $Q_0^0$ and any other square in $Q_1^1$ must be exactly $1$ according to Lemma \ref{lem:dist}.

  Now we must choose the children from the squares in the other diagonal. This
  choice is more delicate since the squares from the first diagonal could be
  much closer. By the pigeonhole principle, for each $P = P^Q_i$, $Q\in \{Q_0^1,
  Q_1^0\}$, there exist exactly two other squares, $P_j^{Q_0^0}$ and
  $P_k^{Q_1^1}$, which are at distance at most $2^{-m}$. Indeed, according to
  Lemma \ref{lem:dist}, the distance condition forces $P_j^{Q_0^0}, P_k^{Q_1^1}$
  to be in the same row (or column) as $P_i^Q$, while each row (or column) of
  $Q_0^0$ (or $Q_1^1$) contains exactly one square by induction hypothesis. In
  fact, the distance between $P_i^Q$ and $P_j^{Q_0^0}$ is precisely equal to
  $0$, and the same holds for $P_k^{Q_1^1}$.

  We now project $Q_j^{Q_0^0}$ and $Q_k^{Q_1^1}$, the children chosen from
  $P_j^{Q^0_0}$ and $P_k^{Q^1_1}$ onto $P$. Note that their projection leaves
  exactly one first-generation child of $P$ untouched, which we select as
  $Q_i^Q$. It suffices to show that the distance from $Q_i^Q$ to any square in
  $Q_0^0$ is larger than $2^{-m}$, since the case of $Q_1^1$ is symmetric and any
  other combination can be dealt with in the same way as above. To see this, if
  the square in $Q_0^0$ does not come from the $P_j^{Q_0^0}$ chosen above, the
  distance has to be larger than $2^{-m}$ by Lemma \ref{lem:dist}. Otherwise,
  the square is exactly $Q_j^{Q_0^0}$. By the construction above, both its horizontal projection and its vertical projection and the respective ones of $Q_i^Q$ are disjoint, which allows one to apply Lemma
  \ref{lem:dist} once more to conclude the proof.
\end{proof}

\subsection{Construction of $\mathcal{Z}$} \label{subsection:buildingZ} In the construction, there are two aspects that will require
special care: each point in $\mathcal{Z}$ has to be close to exactly one point from
$\mathcal{P}$ (but not too close), while two different points in $\mathcal{Z}$ cannot be too close to each other.
We begin by constructing certain special rectangles inside which we shall place the points forming
$\mathcal{Z}$.

\begin{definition*}
  We say that a rectangle $R$ is a standard rectangle if it belongs to the collection
  \[
    \mathcal{E} = \{R \subseteq [0,1)^2: R \text{ dyadic, } |R| = 2^{-2m-2}, \text{ and }
    R \cap \mathcal{P} \neq \emptyset\}.
  \]
  If $R\in \mathcal{E}$ we let $p_R$ denote the unique point in 
  $R \cap \mathcal{P}$. Also let $\mathcal{E}_{p}$ denote the collection of
  standard rectangles containing $p\in \mathcal{P}$.
\end{definition*}

One can obtain $\mathcal{E}_p$ by area-preserving dilations of one fixed $R \in \mathcal{E}_p$. Indeed, for any dyadic $R = I \times J$ and any $j \in \mathbb{Z}$ define
\[
  \Dil_j R = I' \times J',
\]
where $I' \times J'$ is the unique dyadic rectangle in $\mathcal{E}_p$ of dimensions
$2^{j}|I| \times 2^{-j}|J|$. Then it is easy to see that for each standard rectangle $R$, $\mathcal{E}_p = \{\Dil_j R \subseteq [0,1)^2\}$. In particular, for any $p \in \mathcal{P}$ there are approximately $m$ distinct standard
rectangles in $\mathcal{E}_p$. Given $x,y \in \mathbb{R}$, let 
$$
\delta(x,y) = \inf \{|Q|:\, Q \in \mathcal{D} \text{ is a dyadic interval containing } x \text{ and } y\}.
$$ 
Also, for dyadic intervals $K$ define
$$
\delta(x,K):=\inf_{y\in K}\delta(x,y).
$$
Now suppose $R = I \times J \in \mathcal{E}$, with $p_R= p = (x,y)$. Given $k
\geq 1$, we define $I_{(k)}$ to be the largest dyadic interval $I'$ such that
$$
\delta(x, I') = 2^{-k+1}|I|,
$$
and define $J_{(k)}$ similarly. Note that $I_{(1)}$ is the child of $I$
that does not contain $x$, and for all $k\geq 1$, $I_{(k)}$ is the unique dyadic
offspring of $I$ of generation $k$ satisfying $x\notin I_{(k)}$ and $x\in \widehat{I_{(k)}}$ (the dyadic parent of $I_{(k)}$).

The following result is the main technical lemma needed for our construction.

\begin{lemma} \label{lemma:pointFinder2} Fix an integer $k \geq 1$ with $k \ll m$. 
  For any dyadic rectangle $R = I\times J \in \mathcal{E}_p$
  there exists a sub-rectangle $R^* \subset I_{(1)} \times J_{(k)}$ such that
  for all $q \neq p$ in $\mathcal{P}$:
  $$
    \dist(q, R^*)^2 \geq 2^{-2m-1}.
  $$
\end{lemma}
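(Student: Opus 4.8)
The plan is to take $R^*$ of product form $R^* = K^*\times L^*$, where $K^*\subseteq I_{(1)}$ and $L^*\subseteq J_{(k)}$ are dyadic intervals to be chosen, and to arrange that no $q=(x_q,y_q)\in\mathcal{P}\setminus\{p\}$ comes close to $R^*$. The first observation is that whenever $x_q\notin K^*$ and $y_q\notin L^*$ one has
\[
  \dist(q,R^*)^2 = \delta(x_q,K^*)\,\delta(y_q,L^*),
\]
because a dyadic rectangle containing $q$ and meeting $R^*$ must, in each coordinate, contain the smallest dyadic interval joining $x_q$ (resp.\ $y_q$) to $K^*$ (resp.\ $L^*$). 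We will ensure $x_q\notin K^*$ and $y_q\notin L^*$ for all such $q$, reducing everything to lower bounds on $\delta(x_q,K^*)$ and $\delta(y_q,L^*)$. The structural input, used constantly, is: since $p$ is the unique point of $\mathcal{P}$ in $R=I\times J$ and $|R|=2^{-2m-2}$, the $2^{-m}$-separation of $\mathcal{P}$ forces, for every $q\ne p$, that $x_q\notin I$ or $y_q\notin J$; more generally, any dyadic rectangle of area $\le 2^{-2m-1}$ contains at most one point of $\mathcal{P}$. Write $I_x$ for the child of $I$ containing $x$, and $J=J^{[k]}\supset J^{[k-1]}\supset\cdots\supset J^{[1]}=\widehat{J_{(k)}}\supset J_{(k)}$ for the dyadic ancestors of $J_{(k)}$, so $|J^{[i]}|=2^{i-k}|J|$ and $y\in J^{[1]}\setminus J_{(k)}$; recall $|I||J|=2^{-2m-2}$.

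\textbf{Easy cases.} For $q\ne p$ the following are immediate and hold regardless of the placement of $K^*,L^*$ inside $I_{(1)}\times J_{(k)}$. If $x_q\notin I$ and $y_q\notin J$, then $\delta(x_q,K^*)=\delta(x,x_q)$ and $\delta(y_q,L^*)=\delta(y,y_q)$, so $\dist(q,R^*)^2=\dist(p,q)^2\ge 2^{-2m}$. If $x_q\in I_x$, then necessarily $y_q\notin J$, and $\delta(x_q,K^*)=|I|$ while $\delta(x,x_q)\le|I|/2$, so $\dist(q,R^*)^2=\tfrac{|I|}{\delta(x,x_q)}\dist(p,q)^2\ge 2\cdot 2^{-2m}$. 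If $y_q\in J^{[i]}\setminus J^{[i-1]}$ with $i\ge 2$ (so $x_q\notin I$), then $\delta(y_q,L^*)=|J^{[i]}|=\delta(y,y_q)$ and again $\dist(q,R^*)^2=\dist(p,q)^2$. If $y_q\in J^{[1]}\setminus J_{(k)}$ (so $x_q\notin I$), then $\delta(y_q,L^*)=|J^{[1]}|=2^{1-k}|J|$ while $y,y_q$ lie in a common interval of length $2^{-k}|J|$, so $\dist(q,R^*)^2=\tfrac{2^{1-k}|J|}{\delta(y,y_q)}\dist(p,q)^2\ge 2\cdot 2^{-2m}$. In each case $\dist(q,R^*)^2\ge 2^{-2m-1}$, and $x_q\notin K^*$, $y_q\notin L^*$ trivially.

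\textbf{Hard cases and the descent.} Two families remain: $q$ with $x_q\in I_{(1)}$ (which forces $y_q\notin J$) and $q$ with $y_q\in J_{(k)}$ (which forces $x_q\notin I$); these are handled by two symmetric greedy constructions. For the first, $x_q\in I_{(1)}$ gives $\delta(x,x_q)=|I|$, hence $\delta(y,y_q)=\dist(p,q)^2/|I|\ge 4|J|$; write $\delta(y,y_q)=2^{j_q+2}|J|$ with $j_q\ge 0$. We will choose $K^*$ so that $\delta(x_q,K^*)\ge|I_{(1)}|2^{-j_q}$ for all such $q$; then $\dist(q,R^*)^2\ge|I_{(1)}|2^{-j_q}\cdot 2^{j_q+2}|J|=4|I_{(1)}||J|=2^{-2m-1}$. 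Construct $K^*$ by descent: put $C_0=I_{(1)}$; given the dyadic interval $C_{\ell-1}$ of length $|I_{(1)}|2^{-(\ell-1)}$, note that the points $q$ with $x_q\in C_{\ell-1}$ and $j_q=\ell-1$ all lie in $C_{\ell-1}\times J^\dagger_{\ell-1}$, where $J^\dagger_{\ell-1}$ is the dyadic interval of length $2^{\ell+1}|J|$ containing $y$; this rectangle has area $2|I||J|=2^{-2m-1}$ and excludes $p$ (since $x\notin I_{(1)}$), so it contains at most one point of $\mathcal{P}$. Let $C_\ell$ be the child of $C_{\ell-1}$ not containing that point's $x$-coordinate (either child if no such point), and iterate past all the (finitely many) values of $j_q$; set $K^*=C_N$. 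For $q$ with $x_q\in I_{(1)}$ and $j_q=j$, the coordinate $x_q$ leaves the descent path no later than step $j+1$, so $x_q\notin C_{j+1}\supseteq K^*$, whence $\delta(x_q,K^*)\ge 2|C_{j+1}|=|I_{(1)}|2^{-j}$ and also $x_q\notin K^*$.

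\textbf{The second family and conclusion.} For $q$ with $y_q\in J_{(k)}$ one has $\delta(y,y_q)=|J^{[1]}|=2^{1-k}|J|$, hence $\delta(x,x_q)=\dist(p,q)^2/(2^{1-k}|J|)\ge 2^{-2m+k-1}/|J|$; write $\delta(x,x_q)=2^{s_q}2^{-2m+k-1}/|J|$ with $s_q\ge 0$. We choose $L^*$, by the descent of the previous paragraph with the two coordinates exchanged, so that $\delta(y_q,L^*)\ge|J_{(k)}|2^{-s_q}$ for all such $q$; then $\dist(q,R^*)^2\ge 2^{s_q}2^{-2m+k-1}/|J|\cdot 2^{-k-s_q}|J|=2^{-2m-1}$. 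Here at stage $\ell$ the relevant $q$ lie in $\widetilde I_{\ell-1}\times D_{\ell-1}$ with $\widetilde I_{\ell-1}$ the dyadic interval of length $2^{\ell-1}2^{-2m+k-1}/|J|$ containing $x$ and $D_{\ell-1}\subseteq J_{(k)}$ of length $|J_{(k)}|2^{-(\ell-1)}$; this rectangle again has area $2^{-2m-1}$ and excludes $p$, so at most one $q$ is constrained, and one descends into the child of $D_{\ell-1}$ avoiding its $y$-coordinate. As before $\delta(y_q,L^*)\ge|J_{(k)}|2^{-s_q}$ and $y_q\notin L^*$. Putting $R^*=K^*\times L^*\subseteq I_{(1)}\times J_{(k)}$, the identity $\dist(q,R^*)^2=\delta(x_q,K^*)\delta(y_q,L^*)$ is legitimate for every $q\ne p$ (we verified $x_q\notin K^*$, $y_q\notin L^*$ in all cases), and all cases give $\dist(q,R^*)^2\ge 2^{-2m-1}$.

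\textbf{Where the difficulty lies.} The content is entirely in the two hard families: a priori there can be $\sim 2^{2m}$ points of $\mathcal{P}$ with $x_q\in I_{(1)}$ (or with $y_q\in J_{(k)}$), so a naive union bound over forbidden subintervals of $I_{(1)}$ (resp.\ $J_{(k)}$) fails outright — the forbidden regions can cover $I_{(1)}$ many times over. The mechanism that saves the argument is that, at each scale of the descent, the points that are genuinely constraining at that scale are confined to a dyadic rectangle of area $2^{-2m-1}<2^{-2m}$, which by separation carries at most one point of $\mathcal{P}$; the delicate part is to match this scale-by-scale bookkeeping of the ``distance deficit'' ($\delta(y,y_q)/|J|$ in one coordinate, $\delta(x,x_q)|J|$ in the other) against the depth in the dyadic tree, and to do it simultaneously on both axes.
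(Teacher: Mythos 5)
Your proof is correct, and it takes a genuinely different route from the paper's. The paper proves the slightly stronger claim that one can find a sub-rectangle of $I_{(1)}\times J_{(k)}$ disjoint from every standard rectangle $R'\in\mathcal{E}$ with $p_{R'}\neq p$, and obtains this by classifying intersecting standard rectangles as vertical or horizontal, passing to maximal ones, and proving (Lemma~\ref{lemma.exponentialDecay}, which in turn rests on Lemmas~\ref{lemma:LocalizePs} and~\ref{lemma:findMaximalRectangles}) that their first projections have widths exactly $2^{-i}|I_{(\ell)}|$, so their union leaves a dyadic gap. You instead compute $\dist(q,R^*)^2=\delta(x_q,K^*)\,\delta(y_q,L^*)$ directly, dispatch the ``easy'' positions of $q$ by inspection, and handle the two genuinely dangerous families ($x_q\in I_{(1)}$ and $y_q\in J_{(k)}$) by a greedy scale-by-scale descent in which, at each depth, the constraining points are confined to a dyadic rectangle of area $2^{-2m-1}$ not containing $p$, hence at most one point of $\mathcal{P}$ is in play and one child can always be chosen to dodge it.

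The core mechanism is the same in both arguments — the $2^{-m}$-separation forces at most one obstacle per dyadic scale, so the obstacles cannot fill $I_{(1)}$ (resp.\ $J_{(k)}$) — but the implementations diverge. The paper's route builds structural lemmas about the family $\mathcal{E}$ that it reuses later (in particular Lemmas~\ref{lemma:findMaximalRectangles} and~\ref{lemma.exponentialDecay} reappear in the proof of \ref{theorem:ZPoints:UniformDist}), so it is economical over the whole paper. Your route is self-contained: it avoids the auxiliary lemmas entirely, trades them for an explicit case analysis plus the bookkeeping $\delta(y,y_q)=2^{j_q+2}|J|$ matched against depth in the tree, and makes the ``one obstacle per scale'' phenomenon very transparent. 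A small stylistic remark: it is worth noting explicitly that the values $j_q$ (and $s_q$) are bounded above, say by $2m$, since $\dist(p,q)^2\le 1$, so the descent terminates after finitely many steps with $K^*$ (and $L^*$) nonempty; you gesture at this with ``finitely many values of $j_q$'' but the quantitative bound makes it immediate.
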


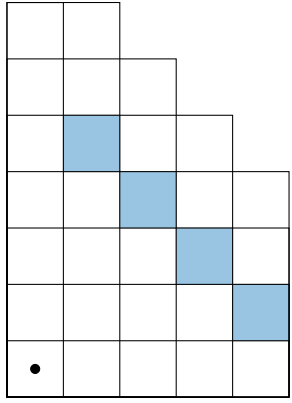
\begin{figure}
\begin{tikzpicture}[scale=0.75]
  \filldraw (0.5,0.5) circle [radius=0.08];

  \draw (0,0) rectangle (5,1);
  \draw (0,0) rectangle (5,2);
  \draw (0,0) rectangle (5,3);

  \draw (0,0) rectangle (5,4);
  \draw (0,0) rectangle (4,5);
  \draw (0,0) rectangle (3,6);
  \draw (0,0) rectangle (2,7);
  \draw (0,0) rectangle (1,7);

  \filldraw[NavyBlue, draw=black, opacity=0.4] (1,4) rectangle (2,5);
  \filldraw[NavyBlue, draw=black, opacity=0.4] (2,3) rectangle (3,4);
  \filldraw[NavyBlue, draw=black, opacity=0.4] (3,2) rectangle (4,3);
  \filldraw[NavyBlue, draw=black, opacity=0.4] (4,1) rectangle (5,2);
\end{tikzpicture}
  \caption{log-log sketch of some standard rectangles. In blue are the subrectangles $I_{(1)} \times J_{(k)}$
  \label{fig:StdRects}
  with $k=3$.}
\end{figure}

We will apply this lemma to construct the points $\mathcal{Z}$ used in the proof of the main theorem as follows: given $R=I\times J \in \mathcal{E}$, we choose one point $z_R$ in $R^{\ast} \subset I_{(1)} \times J_{(k)}$ and let $\mathcal{Z} = \bigcup_{R \in \mathcal{E}}z_{R}$.  Figure \ref{fig:StdRects} gives some intuition about the situation. Note that this immediately guarantees properties \ref{theorem:ZPoints:Size}, \ref{theorem:ZPoints:DistToP} and \ref{theorem:ZPoints:DistToPz} of Theorem \ref{theorem:ZPoints}. This placing also allows us to prove the remaining two properties. We start by proving directly \ref{theorem:ZPoints:Separation}:

\begin{lemma}\label{lemma:Zcard} For every $R \in \mathcal{E}$
  we have
  $$
    \#(R \cap \mathcal{Z}) \lesssim k.
  $$ 
\end{lemma}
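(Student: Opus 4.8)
The plan is to fix a standard rectangle $R = I \times J \in \mathcal{E}$ and count how many points $z_{R'}$ can fall inside it. First I would observe that a point $z_{R'}$ lies in $R'^*$, which sits inside $\pi_1(R')_{(1)} \times \pi_2(R')_{(k)}$, and in particular $z_{R'}$ is close to $p_{R'}$ in the sense of \ref{theorem:ZPoints:DistToP}; since $|R| = 2^{-2m-2}$ is small enough, if $R$ contains $z_{R'}$ at all then $R$ must contain $p_{R'}$, forcing $p_{R'} = p_R =: p$. Thus every point of $\mathcal{Z}$ inside $R$ comes from a standard rectangle $R' \in \mathcal{E}_p$, and these are exactly the area-preserving dilates $\Dil_j R_0$ of any fixed $R_0 \in \mathcal{E}_p$ — there are about $m$ of them, but only a controlled number can place their associated point inside our specific $R$.

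The key step is to show that if $R' = I' \times J' \in \mathcal{E}_p$ and $z_{R'} \in R$, then the ``dilation parameter'' of $R'$ relative to $R$ is restricted to an interval of length $O(k)$. Concretely, write $R = \Dil_{j_0} R_0$ and $R' = \Dil_{j} R_0$. Since $z_{R'}$ lies in the first-generation child $I'_{(1)}$ of the first coordinate that avoids $p$, the point $z_{R'}$ is horizontally separated from $p$ by a dyadic interval of length $\sim |I'|$; similarly it is vertically separated from $p$ by a dyadic interval of length $\sim 2^{-k}|J'|$, i.e. much \emph{closer} vertically. For $z_{R'}$ to lie in $R = I \times J$, which also contains $p$, we need $|I| \gtrsim |I'|$ (the horizontal displacement must fit in $I$) and $|J| \gtrsim 2^{-k}|J'|$ (the vertical displacement must fit in $J$). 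Since $|I||J| = |I'||J'| = 2^{-2m-2}$, these two inequalities read $2^{j} \lesssim 2^{j_0}$ and $2^{-j} \lesssim 2^{k} 2^{-j_0}$, i.e. $j_0 - k \lesssim j \leq j_0 + O(1)$. That pins $j$ down to an interval of integers of length $O(k)$, and since the map $j \mapsto \Dil_j R_0$ is injective, there are at most $O(k)$ such rectangles $R'$, hence at most $O(k)$ points of $\mathcal{Z}$ in $R$.

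The main obstacle — really the only place one has to be careful — is converting the geometric statements ``$z_{R'} \in I'_{(1)}$'' and ``$z_{R'} \in J'_{(k)}$'' together with ``$p, z_{R'} \in I \times J$'' into the clean dyadic inequalities $|I| \gtrsim |I'|$ and $|J| \gtrsim 2^{-k}|J'|$, keeping track of the $\pm O(1)$ slack coming from parents/children and from the definition of $I_{(k)}$ via $\delta(x, I_{(k)}) = 2^{-k+1}|I|$. Since all intervals involved are dyadic and nested ($I, I'$ both have $p$'s first coordinate as an interior-ish point, so one contains the other), this reduces to a comparison of generations and is routine once set up. I would also note at the outset that at most one point $z_R$ is contributed by $R$ itself (the case $R' = R$), which is harmless. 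This gives $\#(R \cap \mathcal{Z}) \lesssim k$ with a constant independent of $k$ and $m$, which is exactly \ref{theorem:ZPoints:Separation}.
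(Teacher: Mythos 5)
Your proposal is correct and follows essentially the same argument as the paper's proof: you first reduce to rectangles $R' \in \mathcal{E}_{p_R}$ (using the uniqueness in \ref{theorem:ZPoints:DistToP}, whereas the paper cites Lemma \ref{lemma:pointFinder2}, but both give $p_{R'} = p_R$), and then obtain the two constraints $|\pi_1(R')| \le |\pi_1(R)|$ and $|\pi_2(R')| \le 2^{k-1}|\pi_2(R)|$ which, combined with the fixed area, pin $R'$ to $O(k)$ choices. The parametrization via $\Dil_j$ is just a cosmetic repackaging of the paper's direct side-length comparison.
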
 
\begin{proof}
  First observe that if $z_{R'} \in R$ with $R, R' \in \mathcal{E}$ then by
  Lemma \ref{lemma:pointFinder2} we must have $p_{R'} = p_{R} = p$. So let
  $\mathcal{Z}_R$ be defined by
  \[
    \mathcal{Z}_R = \{ T \in \mathcal{E}_{p}:\, z_T \in R\}.
  \]
  It suffices to show that $\#(\mathcal{Z}_R) \lesssim k$.
  
  Observe that any $T \in \mathcal{Z}_R$ must have $\pi_1(T) \subseteq
  \pi_1(R)$. Indeed, if $\pi_1(T)\supsetneq \pi_1(R)$, by definition $z_T\in
  \pi_1(T)_{(1)}\times \pi_2(T)_{(k)}$ and one has $\pi_1(R)\subset
  \pi_1(T)_{(1)}$. But this is impossible since
  $\pi_1(T)_{(1)}\subseteq \pi_1(T)\setminus \pi_1(R)$.
  In addition, there must hold 
  $$
  |\pi_2(T)| \leq 2^{k-1} |\pi_2(R)|,
  $$ 
  as the smallest interval containing both $\pi_2(p_T)$ and $\pi_2(z_T)$ is $\widehat{\pi_2(T)_{(k)}}$. The desired estimate then follows from the fact that there are $O(k)$ such standard rectangles.
\end{proof}

\subsection{Proof of lemma \ref{lemma:pointFinder2} and \ref{theorem:ZPoints:UniformDist}} \label{subsection:pointFinder2}

In this subsection, given two dyadic rectangles $R_1,R_2$ that are not contained in one another but which do intersect at some nontrivial set, we say that $R_2$ intersects $R_1$ \textit{horizontally} if 
  $$
   \pi_2(R_2) \subsetneq \pi_2(R_1),
  $$
  and similarly $R_2$ intersects $R_1$ \textit{vertically} if 
  $$
    \pi_1(R_2) \subsetneq \pi_1(R_1).
  $$
One can see that these two options are mutually exclusive for $R_1$ and $R_2$. Also, one can see that if $R_2$ intersects $R_1$ horizontally then $\pi_1(R_1) \subsetneq \pi_1(R_2)$, and if $R_2$ intersects $R_1$ vertically then $\pi_2(R_1) \subsetneq \pi_2(R_2)$. We start by recording the following information that will be used later:

\begin{lemma} \label{lemma:LocalizePs}
  Let $R = I \times J \in \mathcal{E}$ be such that $I \times J^{(\ell+1)}
  \subseteq [0,1)^2$, $\ell\geq 1$. Then
  \[
    I_{(\ell)} \times J^{(\ell+1)}
  \]
  contains exactly one $q \in \mathcal{P}$, $q\neq p_R$, and furthermore
  \[
    q \in I_{(\ell)} \times (J^{(\ell+1)} \setminus J^{(\ell)}).
  \]
\end{lemma}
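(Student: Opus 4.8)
The statement concerns a standard rectangle $R = I \times J \in \mathcal{E}$ whose vertical side can be dilated upward by a factor $2^{\ell+1}$ while staying in $[0,1)^2$, and it asserts that the "tall sliver" $I_{(\ell)} \times J^{(\ell+1)}$ sitting just to the side of $p_R$ (in the first coordinate) and stacked above it (in the second coordinate) contains exactly one point $q \in \mathcal{P} \setminus \{p_R\}$, lying moreover in the topmost band $J^{(\ell+1)} \setminus J^{(\ell)}$. My plan is to reduce everything to a counting statement via the pigeonhole remark recorded after Theorem~\ref{theorem:ZPoints} (namely $\#(S \cap \mathcal{P}) = 2^{2m+1}|S|$ for any dyadic rectangle $S$ with $|S| \ge 2^{-2m-1}$), together with the $2^{-m}$-separation \eqref{eq:PPoints:SeparationOfPoints} of $\mathcal{P}$ and Lemma~\ref{lem:dist}.

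\textbf{Step 1: the big rectangle is area-admissible, so it has the right count.} Write $p_R = (x,y)$. The rectangle $S := I_{(\ell)} \times J^{(\ell+1)}$ has $|S| = 2^{\ell}|I| \cdot 2^{\ell+1}|J| = 2^{2\ell+1}|R| = 2^{2\ell+1} \cdot 2^{-2m-2} = 2^{-2m-1+2\ell} \ge 2^{-2m-1}$, so by the pigeonhole remark $\#(S \cap \mathcal{P}) = 2^{2m+1}|S| = 2^{2\ell}$. I would then observe that $p_R \notin S$: indeed $x \notin I_{(\ell)}$ by the very definition of $I_{(\ell)}$ (it is the generation-$\ell$ offspring of $I$ avoiding $x$). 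So $S$ contains exactly $2^{2\ell}$ points of $\mathcal{P}$, all distinct from $p_R$, and for $\ell = 0$ this would already give the count $1$; the content is really the refinement to the top band plus, for larger $\ell$, the claim that only one point survives after we also intersect with a thin horizontal slab — wait, let me re-read: the claimed conclusion is "exactly one $q$", not $2^{2\ell}$. So $\ell$ must effectively be forced to a small range, or I am miscounting the width.

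Let me recompute with $I_{(\ell)}$ understood correctly: $I_{(\ell)}$ is a single dyadic interval of length $2^{-\ell}|I|$, and $\delta(x, I_{(\ell)}) = 2^{-\ell+1}|I|$; it is \emph{not} all of $I$ minus a child. Then $|S| = 2^{-\ell}|I| \cdot 2^{\ell+1}|J| = 2|R| = 2^{-2m-1}$ independently of $\ell$, so $\#(S \cap \mathcal{P}) = 2^{2m+1} \cdot 2^{-2m-1} = 1$. \textbf{So the count is exactly $1$}, and the first coordinate automatically puts this point away from $p_R$ since $x \notin I_{(\ell)}$. This is the clean argument: area-counting gives exactly one point $q$, and $q \ne p_R$ because their first coordinates live in disjoint dyadic intervals $I_{(\ell)}$ and the complementary part of $I$ containing $x$.

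\textbf{Step 2: locating $q$ in the top band.} It remains to show $q \notin I_{(\ell)} \times J^{(\ell)}$. I would argue by contradiction: suppose $q \in I_{(\ell)} \times J^{(\ell)}$, and compare $q$ with $p_R = (x,y) \in I \times J$. The key is a separation estimate for the pair $(p_R, q)$ using $\dist$. Since $x \in I$, $x \notin I_{(\ell)}$, and $x \in \widehat{I_{(\ell)}}$, the smallest dyadic interval containing both $x$ and any point of $I_{(\ell)}$ is $\widehat{I_{(\ell)}}$, of length $2^{-\ell+1}|I|$. Similarly, if $q$'s second coordinate lay in $J^{(\ell)}$, then since $y \in J \subseteq J^{(\ell)}$ the smallest dyadic interval containing both second coordinates is $J^{(\ell)}$ itself (or a sub-interval), of length at most $2^{\ell}|J|$. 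Hence the minimal enclosing dyadic rectangle of $\{p_R, q\}$ would have area at most $2^{-\ell+1}|I| \cdot 2^{\ell}|J| = 2|R| = 2^{-2m-1}$, giving $\dist(p_R, q)^2 \le 2^{-2m-1} < 2^{-2m}$, i.e.\ $\dist(p_R,q) < 2^{-m}$, contradicting \eqref{eq:PPoints:SeparationOfPoints}. Therefore $q$'s second coordinate must lie in $J^{(\ell+1)} \setminus J^{(\ell)}$. (I would double-check the constant: the separation is $\dist \ge 2^{-m}$, so $\dist^2 \ge 2^{-2m}$, and $2^{-2m-1} < 2^{-2m}$, so the contradiction is genuine; the factor-of-two slack is exactly what the $|R| = 2^{-2m-2}$ normalization buys us.)

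\textbf{Main obstacle.} The only subtlety — and the step I would write most carefully — is Step 2's bookkeeping of which dyadic interval is the \emph{minimal} one containing two given points, since $\dist$ and $\delta$ are defined by infima over enclosing dyadic intervals and these are genuinely rigid (the triangle inequality fails, as the authors warn). Concretely, I must be sure that when $q$'s second coordinate is in $J^{(\ell)}$, the smallest dyadic interval containing it together with $y \in J$ is contained in $J^{(\ell)}$ — this uses that $J \subseteq J^{(\ell)}$ and that $J^{(\ell)}$ is dyadic, so any dyadic interval meeting both $J$ and that other sub-interval of $J^{(\ell)}$ and not contained in $J^{(\ell)}$ would be an ancestor of $J^{(\ell)}$, hence larger, so the infimum is attained inside $J^{(\ell)}$. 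Analogously for the first coordinate via $\widehat{I_{(\ell)}}$. Once that rigidity is pinned down, the proof is just the two area computations above.
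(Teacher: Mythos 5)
Your proof is correct and uses the same two ingredients as the paper (the pigeonhole count $\#(S\cap\mathcal{P})=2^{2m+1}|S|$ for dyadic $S$ with $|S|\ge 2^{-2m-1}$, plus the $2^{-m}$-separation), with a slight streamlining: you count directly in $I_{(\ell)}\times J^{(\ell+1)}$, which has area exactly $2^{-2m-1}$ and hence exactly one $\mathcal{P}$-point (automatically $\ne p_R$ since $x\notin I_{(\ell)}$), whereas the paper counts two points in the parent rectangle $\widehat{I_{(\ell)}}\times J^{(\ell+1)}$ and then has to rule out the sibling slab $(\widehat{I_{(\ell)}}\setminus I_{(\ell)})\times J^{(\ell+1)}$ by a separate separation argument. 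Your Step 2 (excluding $q\in I_{(\ell)}\times J^{(\ell)}$ because then $p_R$ and $q$ would lie in the dyadic rectangle $\widehat{I_{(\ell)}}\times J^{(\ell)}$ of area $2^{-2m-1}<2^{-2m}$) is exactly the paper's argument for that part.
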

\begin{proof}
We have that $|\widehat{I_{(\ell)}} \times J^{(\ell+1)}| = 2^{-\ell+1}2^{\ell+1}|I||J| = 2^{-2m},$ so there are exactly two points from $\mathcal{P}$ in $\widehat{I_{(\ell)}} \times J^{(\ell+1)}$. One of these points must be $p_{R}$, so the other one is $q$. To prove the second part of the assertion, note that we cannot have $q \in (\widehat{I_{(\ell)}} \backslash I_{(\ell)}) \times J^{(\ell+1)}$ or $q \in I_{(\ell)} \times J^{(\ell)}$, since in each case we would have $p_R$ and $q$ contained in a dyadic rectangle of area smaller or equal than $2^{-2m-1}$, violating \eqref{eq:PPoints:SeparationOfPoints}.
\end{proof}

\begin{lemma}
  \label{lemma:findMaximalRectangles}
  Let $R = I \times J \in \mathcal{E}$. For any $\ell\geq 1$ with
  $2^{\ell+1}|J|\leq 1$, there exists exactly one $T = W \times
  H \in \mathcal{E}$ intersecting $R$ vertically such that $p_{T} \neq p_{R}$, $W\subset
  I_{(\ell)}$, and $|W| = 2^{-1}|I_{(\ell)}|$. 
\end{lemma}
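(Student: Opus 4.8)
The strategy is to off-load the substance to Lemma \ref{lemma:LocalizePs} and treat the rest as bookkeeping. First I would determine the only possible shape of a rectangle $T = W\times H$ meeting all the requirements. Since $T\in\mathcal{E}$ we have $|T| = 2^{-2m-2} = |I||J|$; together with $W\subset I_{(\ell)}$ and $|W| = \tfrac12|I_{(\ell)}| = 2^{-\ell-1}|I|$ this forces $|H| = 2^{\ell+1}|J|$. The requirement that $T$ intersect $R$ vertically gives, by the observations preceding the lemma, $\pi_2(R) = J\subsetneq\pi_2(T) = H$; since $H$ is dyadic of length $2^{\ell+1}|J|$ and contains $J$, we get $H = J^{(\ell+1)}$. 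Thus every admissible $T$ has the form $W\times J^{(\ell+1)}$ with $W$ one of the two dyadic children of $I_{(\ell)}$. I would also note here that the hypothesis $2^{\ell+1}|J|\le1$ is exactly what makes $J^{(\ell+1)}\subseteq[0,1)$, hence $T\subseteq[0,1)^2$ a legitimate standard rectangle, and it is what lets us invoke Lemma \ref{lemma:LocalizePs} for $R$ with this $\ell$.

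For existence, I would apply Lemma \ref{lemma:LocalizePs}: inside $I_{(\ell)}\times J^{(\ell+1)}$ there is exactly one point $q\in\mathcal{P}$ with $q\neq p_R$. Let $W$ be the child of $I_{(\ell)}$ such that $W\times J^{(\ell+1)}$ contains $q$, and set $T = W\times J^{(\ell+1)}$. Then $T$ is dyadic with $|T| = 2^{-\ell-1}|I|\cdot2^{\ell+1}|J| = |I||J| = 2^{-2m-2}$ and $q\in T\cap\mathcal{P}$, so $T\in\mathcal{E}$ and $p_T = q$. Writing $p_R = (x,y)$, we have $x\notin I_{(\ell)}$ by definition of $I_{(\ell)}$, so $p_R\notin T$; hence $p_T = q\neq p_R$ as required. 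Finally $T$ intersects $R$ vertically: they meet in $W\times J\neq\emptyset$, neither contains the other, $\pi_1(T) = W\subsetneq I = \pi_1(R)$, and $\pi_2(R) = J\subsetneq J^{(\ell+1)} = \pi_2(T)$ (using $\ell\geq1$). The conditions $W\subset I_{(\ell)}$ and $|W| = \tfrac12|I_{(\ell)}|$ hold by construction.

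For uniqueness, let $T' = W'\times H'$ be any rectangle with all the stated properties. By the shape analysis of the first paragraph, $T' = W'\times J^{(\ell+1)}$ with $W'$ a child of $I_{(\ell)}$, so $T'\subseteq I_{(\ell)}\times J^{(\ell+1)}$; and $p_R\notin T'$ since $x\notin I_{(\ell)}$. Therefore $p_{T'}$ is a point of $\mathcal{P}$ lying in $I_{(\ell)}\times J^{(\ell+1)}$ and different from $p_R$, so $p_{T'} = q$ by Lemma \ref{lemma:LocalizePs}. This forces $W'$ to be the child of $I_{(\ell)}$ containing the first coordinate of $q$, i.e. $W' = W$, whence $T' = T$.

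I do not expect a genuine obstacle: the real content is in Lemma \ref{lemma:LocalizePs}, and what remains is translating ``$T\in\mathcal{E}$'' and ``intersects $R$ vertically'' into explicit constraints on $W$ and $H$. The only places requiring care are the length/area arithmetic pinning down $H = J^{(\ell+1)}$ and confirming $|T| = 2^{-2m-2}$, and invoking the hypothesis $2^{\ell+1}|J|\le1$ exactly where $J^{(\ell+1)}$ must be a subinterval of $[0,1)$ so that $T$ is a standard rectangle and Lemma \ref{lemma:LocalizePs} applies.
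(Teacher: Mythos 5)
Your proof is correct and takes essentially the same route as the paper: both hinge on Lemma \ref{lemma:LocalizePs} pinning down the unique $q\in\mathcal{P}\cap\bigl(I_{(\ell)}\times J^{(\ell+1)}\bigr)$ with $q\neq p_R$, and then defining $T$ as the child of $I_{(\ell)}$ containing $\pi_1(q)$ crossed with $J^{(\ell+1)}$. The only difference is organizational: you first do a ``shape analysis'' showing any admissible $T$ must equal $W\times J^{(\ell+1)}$ with $W$ a child of $I_{(\ell)}$, then derive uniqueness directly, whereas the paper argues uniqueness by contradiction (two distinct such $T,T'$ would be disjoint and yield two distinct points of $\mathcal{P}$ in $I_{(\ell)}\times J^{(\ell+1)}$ other than $p_R$, violating Lemma \ref{lemma:LocalizePs}); the two formulations are logically equivalent.
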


\begin{proof}
  By Lemma \ref{lemma:LocalizePs} the rectangle $I_{(\ell)} \times
  J^{(\ell+1)}$ contains exactly one $q \in \mathcal{P}$ and $q \neq p_{R}$. Moreover, we know that $q \in I_{(\ell)} \times (J^{(\ell+1)} \backslash J^{(\ell)})$. We define $W$ as the child of $I_{(\ell)}$ that contains $\pi_1(q)$, and choose $H$ so that $\pi_2(q) \in H$ and $T = W \times H$ has area $2^{-2m-2}$. In particular, we
  have $H = J^{(\ell+1)}$. By construction, $T \in \mathcal{E}$. 

  We now claim that $T$ is the unique standard rectangle intersecting $R$ vertically with $p_{T} \neq p_{R}$, $W\subset I_{(\ell)},$ and $|W| = \frac{1}{2}|I_{(\ell)}|$. By contradiction, suppose there existed another $T' = W' \times H'$ satisfying the same properties. Then since $W, W' \subset I_{(\ell)}$ with $|W| = |W'|$ we know that $T$ and $T'$ are disjoint and therefore $p_T \neq p_{T'}$. Since $T'$ is a standard rectangle and $T,T'$ both intersect $R$ vertically, we also know that $H' = H = J^{(\ell+1)}$. But this contradicts Lemma \ref{lemma:LocalizePs}: that there can only be one $q \in \mathcal{P}$ in $I_{(\ell)} \times J^{(\ell+1)}$ with $q \neq p_{R}$, so $p_T = p_{T'}$, which is a contradiction. 
\end{proof}


Lemma \ref{lemma:findMaximalRectangles} holds for rectangles intersecting $R$ horizontally with exactly the same proof, which we omit for brevity. Assume now that the rectangle $R=I \times J$ has been fixed. Let
$$
\mathcal{E}_R^{\ell,v} = \left\{ T \in \mathcal{E}: T \mbox{ intersects } R \mbox{ vertically and } \pi_1(T)\subset I_{(\ell)} \right\}.
$$
and
$$
\mathcal{E}_R^{\ell,h} = \left\{ T \in \mathcal{E}: T \mbox{ intersects } R \mbox{ horizontally and } \pi_2(T)\subset J_{(\ell)} \right\}.
$$
\noindent We will analyze the vertical rectangles $\mathcal{E}_R^{\ell,v}$; essentially the same arguments apply to the horizontal collection $\mathcal{E}_R^{\ell,h}$.

If $S$ and $T$ are in $\mathcal{E}_R^{\ell,v}$, we say that $S\preceq T$ if $\pi_1(S) \subset \pi_1 (T)$ (an analogous order can be defined in $\mathcal{E}_R^{\ell,v}$). Denote by $\mathcal{E}_R^{\ell,v,*}=\{T_1, T_2, \dots\}$ the set of maximal elements with respect to the ordering $\preceq$. Assume they are ordered so that the sequence $a_i:=|\pi_1(T_i)|$ is non-increasing.

\begin{lemma}
  \label{lemma.exponentialDecay}
  The sequence $a_i$ satisfies
  $$
  a_i = 2^{-i}|I_{(\ell)}|, 1 \leq i \leq m- \ell.
  $$
\end{lemma}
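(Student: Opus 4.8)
I want to show that the maximal vertically-intersecting standard rectangles inside the strip $I_{(\ell)}$ have first-projections of sizes $2^{-1}|I_{(\ell)}|, 2^{-2}|I_{(\ell)}|, \dots, 2^{-(m-\ell)}|I_{(\ell)}|$, one of each. The key input is Lemma \ref{lemma:findMaximalRectangles}: for each admissible scale it produces \emph{exactly one} standard rectangle $T = W\times H$ intersecting $R$ vertically with $p_T \neq p_R$, $W\subset I_{(\ell)}$, and $|W| = \tfrac12|I_{(\ell)}|$. The idea is to iterate this statement on nested strips.

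First I would observe that a rectangle $T=W\times H\in \mathcal{E}_R^{\ell,v}$ with $p_T \neq p_R$ is \emph{maximal} with respect to $\preceq$ precisely when $W$ is \emph{not} contained in any smaller admissible strip, i.e. when $|W| = \tfrac12|I_{(\ell)}|$, $\tfrac14|I_{(\ell)}|$, etc., but always realized as the half-strip of the largest dilate of $I$ still containing it — more carefully, $T_1$ is the one with $|W|=\tfrac12|I_{(\ell)}|$ from Lemma \ref{lemma:findMaximalRectangles} applied with the given $\ell$. Then, having removed the half of $I_{(\ell)}$ occupied by $\pi_1(T_1)$, I note that the other child $I_{(\ell+1)}$ of $I_{(\ell)}$ is exactly the $(\ell+1)$-th dyadic offspring of $I$ avoiding $x=\pi_1(p_R)$, so Lemma \ref{lemma:findMaximalRectangles} applies again with $\ell$ replaced by $\ell+1$: it yields exactly one standard rectangle $T_2$ intersecting $R$ vertically, $p_{T_2}\neq p_R$, with $\pi_1(T_2)$ a child of $I_{(\ell+1)}$, hence $|\pi_1(T_2)| = \tfrac12|I_{(\ell+1)}| = 2^{-2}|I_{(\ell)}|$. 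Iterating, at step $i$ one works inside $I_{(\ell+i-1)}$ and gets a unique $T_i$ with $a_i = |\pi_1(T_i)| = 2^{-1}|I_{(\ell+i-1)}| = 2^{-i}|I_{(\ell)}|$. The uniqueness at each stage, together with the fact that every element of $\mathcal{E}_R^{\ell,v,*}$ must have $\pi_1$ equal to the half-strip of exactly one $I_{(\ell+i-1)}$ (since any vertically-intersecting standard rectangle has $\pi_1$ contained in some $I_{(\ell+i-1)}$ but not in the next one down, by the structure of dyadic offspring of $I$ avoiding $x$), shows the list is exactly $\{T_1,\dots,T_{m-\ell}\}$ and the $a_i$ are as claimed.

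The range $1\le i\le m-\ell$ is forced by the hypothesis $2^{\ell+1}|J|\le 1$ in Lemma \ref{lemma:findMaximalRectangles}: a standard rectangle $T = W\times H$ with $|W| = 2^{-i}|I_{(\ell)}| = 2^{-(\ell+i-1)}|I| \cdot \tfrac12 \cdot 2^{?}$ — unwinding, $|H|$ must be $2^{2i-2}|J|$-ish up to a factor, and since $H\subseteq [0,1)$ and $|I||J| = 2^{-2m-2}$ with $|I_{(\ell)}| = 2^{-\ell}|I|$, the constraint $H\subseteq[0,1)$ together with the area normalization $|W||H| = 2^{-2m-2}$ caps $i$ at $m-\ell$; beyond that the required $H$ would not fit in the unit interval, so Lemma \ref{lemma:findMaximalRectangles} has no admissible $\ell'$ and the list terminates. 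I would verify this arithmetic explicitly with $|I| = 2^{-a}$, $|J| = 2^{-b}$, $a+b = 2m+2$.

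\textbf{Main obstacle.} The one genuinely delicate point is the \emph{bookkeeping} that the maximal elements are in bijection with the dilates $I_{(\ell)}, I_{(\ell+1)}, \dots, I_{(m-1)}$ and that each contributes exactly one maximal rectangle of the half-strip size — i.e., ruling out that some $I_{(\ell+i-1)}$ contributes two maximal elements, or zero. Ruling out ``two'' uses the uniqueness clause of Lemma \ref{lemma:findMaximalRectangles}; ruling out ``zero'' (equivalently, that the process genuinely continues until the area/unit-interval constraint bites at $i = m-\ell$) is where one must be careful that at each stage the hypothesis $2^{(\ell+i-1)+1}|J| \le 1$ of Lemma \ref{lemma:findMaximalRectangles} still holds, which is exactly equivalent to $i \le m-\ell$. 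Everything else is a routine induction on $i$.
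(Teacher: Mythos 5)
Your plan hinges on the claim that $I_{(\ell+1)}$ is ``the other child of $I_{(\ell)}$,'' and this is where the proof breaks. Recall the paper's definition: $I_{(k)}$ is the unique generation-$k$ dyadic offspring of $I$ satisfying $x\notin I_{(k)}$ but $x\in\widehat{I_{(k)}}$, where $x=\pi_1(p_R)$. In particular the parent $\widehat{I_{(\ell+1)}}$ \emph{contains} $x$, while $I_{(\ell)}$ does not. These two intervals are both generation-$\ell$ offspring of $I$ with the common parent $\widehat{I_{(\ell)}}$, so they are \emph{siblings}, and consequently $I_{(\ell+1)}$ is disjoint from $I_{(\ell)}$ (it is the ``nephew'' on the $x$-side of $I_{(\ell)}$, not a child of it). The sets $I_{(\ell)},I_{(\ell+1)},I_{(\ell+2)},\dots$ are pairwise disjoint intervals marching toward $x$, not a nested chain inside $I_{(\ell)}$.

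This invalidates the iteration: applying Lemma \ref{lemma:findMaximalRectangles} to the fixed base rectangle $R$ with $\ell$ replaced by $\ell+i-1$ produces a standard rectangle whose first projection lies in $I_{(\ell+i-1)}$, which for $i\geq 2$ does not meet $I_{(\ell)}$ at all. Such a rectangle is not a member of $\mathcal{E}_R^{\ell,v}$, so it says nothing about $a_2,a_3,\dots$. Your bookkeeping claim that every vertically-intersecting standard rectangle with $\pi_1\subset I_{(\ell)}$ has $\pi_1$ contained in some $I_{(\ell+j)}$ is false for the same reason.

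The paper's proof gets the nested recursion right by \emph{changing the reference point} at each step, not by increasing $\ell$. Having found $T_{i-1}$ and identified the remaining room $K\subset I_{(\ell)}$ (the sibling of $\pi_1(T_{i-1})$ inside $\widehat{K}$), one takes the standard rectangle $\tilde{T}_{i-1}\in\mathcal{E}_{p_{T_{i-1}}}$ with $\pi_1(\tilde{T}_{i-1})=\widehat{K}$ and applies Lemma \ref{lemma:findMaximalRectangles} to $\tilde{T}_{i-1}$ with $\ell=1$. Since the reference point is now $p_{T_{i-1}}$, which lies in $\pi_1(T_{i-1})$ and not in $K$, the interval $(\widehat{K})_{(1)}$ relative to this new reference point is exactly $K$; the resulting rectangle therefore lands inside $I_{(\ell)}$ with first projection of size $\tfrac12|K|=2^{-i}|I_{(\ell)}|$. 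The termination at $i=m-\ell$ then comes from the hypothesis $2^{2}|\pi_2(\tilde{T}_{i-1})|\leq 1$ of that lemma. Note also that the upper bound $a_i<|K|$ (ruling out $a_i=a_{i-1}$) is not a consequence of the uniqueness clause alone, as you suggest; the paper deduces it from the $2^{-m}$-separation of $\mathcal{P}$, since $\pi_1(T_i)=K$ would force $T_{i-1}\cup T_i$ to be a dyadic rectangle of area $2^{-2m-1}$ containing two points of $\mathcal{P}$. Your induction would need to be restructured along these lines; the fixed-$R$, increasing-$\ell$ iteration simply does not stay inside the relevant family $\mathcal{E}_R^{\ell,v}$.
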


\begin{proof}
  By Lemma \ref{lemma:findMaximalRectangles} we find that $T_{1}$ satisfies $\pi_1(T_1) \subset I_{(\ell)}$ and $a_1 = 2^{-1}|I_{(\ell)}|$. 
  Moreover, we know that there can be no other $T_{j}$ with $a_j = a_1$, and so in particular $a_2 \leq 2^{-2}|I_{(\ell)}|$. 

  We assume inductively that the desired result holds for $T_1, \ldots, T_{i-1}$,
  and aim to show that $a_i = 2^{-i}|I_{(\ell)}|$. Note that $\{\pi_1(T_1),\ldots,
  \pi_1(T_{i-1})\}$ are pairwise disjoint. Then by induction 
  $$
  \pi_1(T_i) \subset I_{(\ell)} \backslash \bigsqcup_{j=1}^{i-1} \pi_1(T_j),
  $$
  and we also have that $a_j=2^{-1}a_{j-1}$ and $a_1 = 2^{-1}|I_{(\ell)}|$. Therefore, $\pi_1(T_i)$ must be contained in some interval $K$ with $|K|= 2^{-i+1}|I_{(\ell)}|$. Moreover, by induction $K$ and $\pi_1(T_{i-1})$ must have the same dyadic parent. 

We first show that $a_i<|K|$. To see this, suppose by contradiction that $\pi_1(T_i)=K$. Then $a_i=a_{i-1}$ and 
  $$
  \widehat{\pi_1(T_i)}=\widehat{\pi_1(T_{i-1})}.
  $$
Since both $T_{i-1}$ and $T_i$ must also have the same height, their union is a dyadic rectangle of area $2^{-2m-1}$ which contains both $p_{T_{i-1}}$ and $p_{T_i}$, which is impossible by \eqref{eq:PPoints:SeparationOfPoints}.

Finally, we have to show that $a_i \geq 2^{-1} a_{i-1}$ or, equivalently, that there exists $T \in \mathcal{E}_R^{\ell,v,*}$ with $ |\pi_1(T)|=2^{-1} a_{i-1}$. To that end, let $\tilde{T}_{i-1} \in \mathcal{E}_{p_{T_{i-1}}}$ be the unique rectangle with $\pi_1(\tilde{T}_{i-1}) =\widehat{K}$. If $2^{2}|\pi_2(\tilde{T}_{i-1})|\leq 1$, one can apply Lemma \ref{lemma:findMaximalRectangles} with $\ell=1$ to $\tilde{T}_{i-1}$ to conclude that there is exactly one $T \in \mathcal{E}$ intersecting $\tilde{T}_{i-1}$ vertically with $p_T \neq p_{T_{i-1}}$ and 
$$
  |\pi_1(T)| = 2^{-1}|\widehat{K}_{(1)}| = 2^{-i}|I_{(\ell)}|.
$$ 
$T_{i} = T$ by maximality, and this closes the induction. If on the other hand $2^{2}|\pi_2(\tilde{T}_{i-1})|>1$, then we claim that $i > m-\ell$ and we have finished. Indeed, if $T_i$ exists in this case, one has 
$$
|\pi_1(T_i)|<|K|=2^{-1}|\pi_1(\tilde{T}_{i-1})|.
$$ 
Therefore, 
$$|\pi_2(T_i)|>2|\pi_2(\tilde{T}_{i-1})|\geq 2^2|\pi_2(\tilde{T}_{i-1})|>1,
$$
which means it is impossible for $T_i$ to be contained in $[0,1)^2$.
\end{proof}

\begin{proof}[Proof of lemma \ref{lemma:pointFinder2}] It is enough to show the
  following slightly stronger claim: for any pair of integers $k,\ell \geq 1$,
  any $R=I\times J \in \mathcal{E}$ contains a non-empty rectangle $E\times F$ such that
$$
    E \times F \subseteq  I_{(\ell)} \times J_{(k)} \backslash
    \big(\bigcup_{\substack{ R' \in \mathcal{E} \\ p_{R'} \neq p_{R} }} R' \big).
$$
Any $R' \in \mathcal{E}$ which intersects $I_{(\ell)} \times J_{(k)}$ must be in one of the two collections $\mathcal{E}_R^{\ell,v}$, $\mathcal{E}_R^{k,h}$, when $p_R \neq p_{R'}$.

We start by constructing $E$, so now we only need to take rectangles belonging to $\mathcal{E}_R^{\ell,v,*}$ into account. From Lemma \ref{lemma.exponentialDecay} we know that $a_i=|\pi_1(T_i)|$ satisfies $a_i = 2^{-i}|I_{(\ell)}|$. It thus follows that $|\pi_2(T_i)|$ must increase in size exponentially, and therefore the total number of $T_i \in \mathcal{E}_R^{\ell,v,*}$ is bounded by some constant that depends on $R$ (since all rectangles are contained in $[0,1)^2$). Since there are only finitely many $\pi_1(T_i) \subset I_{(\ell)}$, the estimate on their sizes from Lemma \ref{lemma.exponentialDecay} implies that there must be some dyadic subinterval $E \subset I_{(\ell)}$ with 
  $$
  \pi_1(T_i) \cap E = \emptyset \mbox{ for all }i,
  $$
as desired. The same procedure yields an interval $F \subset J_{(k)}$ which no rectangles $T'\in \mathcal{E}_R^{k,h,*}$ can intersect.
\end{proof}

Finally, we turn to the proof of \ref{theorem:ZPoints:UniformDist}, which is the content of the next lemma.
 
\begin{lemma}
  For every dyadic rectangle $R\subset [0,1)^2$ with $|R| \geq 2^{-2m-1}$ we have
  $$
    \#(R \cap \mathcal{Z}) \lesssim 2^{2m}mk |R|.
  $$
\end{lemma}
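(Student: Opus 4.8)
The plan is to bound $\#(R\cap\mathcal{Z})$ by counting, for each point $p\in\mathcal{P}\cap R$, how many points $z\in\mathcal{Z}$ with $p(z)=p$ can lie in $R$, and then multiply by the number of relevant $p$'s. Recall that $\mathcal{Z}=\bigcup_{T\in\mathcal{E}}z_T$, that each $z_T$ lies in $R^*\subset \pi_1(T)_{(1)}\times\pi_2(T)_{(k)}$, and that $p(z_T)=p_T$. So counting $z\in R\cap\mathcal{Z}$ amounts to counting standard rectangles $T\in\mathcal{E}$ with $z_T\in R$, and since $z_T$ lies in $T$ this forces $T\cap R\neq\emptyset$.

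First I would split the count according to which point $p=p_T\in\mathcal{P}$ the rectangle $T$ is attached to. If $z_T\in R$ then $T\cap R\neq\emptyset$ and $p_T\in T$; but $T$ has area $2^{-2m-2}$ while $R$ has area $\geq 2^{-2m-1}$, so the condition $T\cap R\neq\emptyset$ only restricts $p_T$ to lie in a slightly dilated version of $R$ (roughly the triple in each coordinate). By the pigeonholing remark after Theorem \ref{theorem:ZPoints} (or a direct application of \eqref{eq:PPoints:SeparationOfPoints} and the $2^{-m}$-separation), the number of points of $\mathcal{P}$ in any dyadic rectangle of area comparable to $|R|\geq 2^{-2m-1}$ is $\lesssim 2^{2m}|R|$. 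Thus there are $\lesssim 2^{2m}|R|$ admissible values of $p$.

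Next, I would fix one such $p\in\mathcal{P}$ and count the $T\in\mathcal{E}_p$ with $z_T\in R$. Here is where the structure of $\mathcal{E}_p$ enters: by the discussion after the definition of standard rectangles, $\mathcal{E}_p=\{\Dil_j R_0\subseteq[0,1)^2\}$ is a chain of $O(m)$ area-preserving dilates, so there are at most $O(m)$ rectangles $T\in\mathcal{E}_p$ in total, hence at most $O(m)$ with $z_T\in R$. But that alone gives $\lesssim 2^{2m}|R|\cdot m$, off by a factor $k$. To recover the correct bound I instead observe — exactly as in the proof of Lemma \ref{lemma:Zcard} — that if $z_T\in R$ then the relative position of $T$ and $R$ is constrained: writing $z_T\in\pi_1(T)_{(1)}\times\pi_2(T)_{(k)}$, the requirement $z_T\in R$ pins down $\pi_1(T)$ and $\pi_2(T)$ up to an $O(k)$ ambiguity in one coordinate (the smallest interval containing $\pi_2(p_T)$ and $\pi_2(z_T)$ is $\widehat{\pi_2(T)_{(k)}}$, of generation within $k$ of $\pi_2(T)$), so only $O(k)$ rectangles in the chain $\mathcal{E}_p$ can satisfy $z_T\in R$. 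Multiplying, $\#(R\cap\mathcal{Z})\lesssim 2^{2m}|R|\cdot k \cdot (\text{a bounded correction}) \lesssim 2^{2m}mk|R|$ — wait, we must be careful: actually the clean way is to note $\#(R\cap\mathcal{P})\lesssim 2^{2m}|R|$ may already absorb an $m$, so combining the $O(m)$ dilates bound with the $O(k)$ positional constraint and the point count, the total is $\lesssim 2^{2m}mk|R|$ once one checks the arithmetic; the factor $m$ can alternatively be viewed as coming from summing a geometric-type series over the $O(m)$ scales in $\mathcal{E}_p$ while only $O(k)$ contribute per point but the bound $2^{2m}|R|$ on points is itself only tight up to the weak-type loss.

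I expect the main obstacle to be bookkeeping the two regimes of $R$ cleanly: one must handle both the case where $R$ is genuinely large (so many points of $\mathcal{P}$ sit inside, and the $O(k)$-per-point estimate is what saves a factor) and the borderline case $|R|\sim 2^{-2m-1}$ where $R$ contains only $O(1)$ points of $\mathcal{P}$. In the latter the bound $\#(R\cap\mathcal{Z})\lesssim k$ from Lemma \ref{lemma:Zcard} essentially already applies (after splitting $R$ into $O(1)$ standard rectangles), and $k\lesssim 2^{2m}mk|R|$ trivially; in the former the point-counting plus the per-point $O(k)$ bound does the job. Making the dilation-chain counting argument uniform in $R$ — i.e. verifying that the $O(k)$ positional constraint genuinely holds regardless of how $R$ sits relative to $p$ — is the one place that needs a short but careful argument mirroring Lemma \ref{lemma:Zcard}.
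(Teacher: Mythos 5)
There is a genuine gap. Your plan hinges on the claim that if $T\in\mathcal{E}$ and $T\cap R\neq\emptyset$, then $p_T$ lies in a bounded dilate of $R$, so that the number of admissible $p\in\mathcal{P}$ is $\lesssim 2^{2m}|R|$. This is false: rectangles in $\mathcal{E}$ have fixed area $2^{-2m-2}$ but arbitrary eccentricity, so a $T$ meeting $R$ can stick far out of $R$ in one coordinate, and $p_T$ can sit anywhere in $T$ --- in particular far outside any fixed dilate of $R$. Concretely, take $|R|=2^{-2m-1}$, the critical scale the paper reduces to; your formula gives $\lesssim 2^{2m}|R|=1/2$, i.e.\ $O(1)$ admissible points, yet the maximal collections $\mathcal{E}_R^{\ell,v,*}$ and $\mathcal{E}_R^{\ell,h,*}$ (Lemma~\ref{lemma.exponentialDecay}) produce on the order of $m$ \emph{distinct} points $p_T$ attached to standard rectangles that intersect $R$ and can have $z_T\in R$. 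So the point-count step is off by a factor of roughly $m$ at exactly the scale that matters, and this is not a bookkeeping issue --- it is the heart of the lemma. Your own arithmetic also reveals the confusion: you say $2^{2m}|R|\cdot m$ is ``off by a factor $k$'' from the target $2^{2m}mk|R|$, but it is actually \emph{smaller}; the fact that your (incorrect) estimate appears to beat the true bound should have been a warning sign.

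The paper avoids this by first tiling $R$ into dyadic pieces of area exactly $2^{-2m-1}$ (so each piece contains a single $p_0\in\mathcal{P}$), and then splitting $\mathcal{Z}\cap R$ into three families according to whether the generating standard rectangle $T_z$ has $p(z)=p_0$, intersects $R$ vertically, or intersects $R$ horizontally. The first family is $O(k)$ by Lemma~\ref{lemma:Zcard}. For the other two, the key is not counting $p$'s directly but counting maximal standard rectangles: by Lemma~\ref{lemma:findMaximalRectangles} and the exponential-decay structure of Lemma~\ref{lemma.exponentialDecay}, only $O(m)$ maximal rectangles in $\mathcal{E}_R^{\ell,v,*}$ (resp.\ $\mathcal{E}_R^{\ell,h,*}$) can pass through $R$ (their heights grow geometrically, so they run out of room in $[0,1)^2$), and each such maximal rectangle hosts $O(k)$ points of $\mathcal{Z}$ again by Lemma~\ref{lemma:Zcard}. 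That yields $O(mk)$ per tile and hence $\lesssim 2^{2m}mk|R|$ overall. The idea you are missing is precisely this geometric-growth count of maximal rectangles crossing $R$; the ``$O(m)$ scales in the chain $\mathcal{E}_p$'' you invoke is a statement about rectangles sharing a single $p$, which is a different and insufficient count.
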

\begin{proof}

First, we may assume that $|R|=2^{-2m-1}$ and show
$$
 \#(R \cap \mathcal{Z}) \lesssim mk.
$$
Indeed, if it is larger we can just write it as a disjoint union of dyadic rectangles of area $2^{-2m-1}$ and use the estimate for rectangles of that size. Since $|R| = 2^{-2m-1}$ we know that it contains exactly one point $p_0\in\mathcal{P}$, and from Lemma \ref{lemma:Zcard} we know $R$ contains about $k$ points $z \in \mathcal{Z}$ such that $p(z)=p_0$ 

For every $z \in R \cap \mathcal{Z}$ such that $p(z)\not=p_0$, there exists $T
  \in \mathcal{E}$ for which $z_T = z$, which we denote by $T_z$. Since both
  $R$ and $T_z$ are dyadic rectangles and $|T_z| < |R|$ we must have 
$$
  \mathcal{Z} \cap R = \mathcal{V} \sqcup \mathcal{H} \sqcup \mathcal{O},
$$
where $\mathcal{V}$ consists of those points $z \in \mathcal{Z} \cap R$ such
  that $T_z$ intersects $R$ vertically, $\mathcal{H}$ consists of those
  points $z \in \mathcal{Z} \cap R$ such that $T_z$ intersects $R$
  horizontally, and $\mathcal{O}$ is the collection of points $z \in
  \mathcal{Z}$ whose associated point $p(z) = p_0$. We will only estimate
  the size of $\mathcal{V}$, since the argument for
  $\mathcal{H}$ is similar.

By the construction of $\mathcal{Z}$ and the previous arguments, for each $z \in \mathcal{V}$ there exists $T\in \mathcal{E}_R^{\ell,v,*}$ containing $z$. By Lemma \ref{lemma:Zcard} it suffices to show that there are no more than some constant times $m$ such rectangles.

Let $\tilde{R}=\tilde{I} \times \tilde{J}$ be the unique rectangle in $\mathcal{E}$ satisfying 
$$ 
p_R \in \tilde{R} \subset R \mbox{ and } \pi_1(\tilde{R}) = \pi_1(R).
$$
Observe that any rectangle $T \in \mathcal{E}_R^{\ell,v,*}$ intersecting $R$
  vertically and whose $z_T$ is inside $R$ must intersect $(\tilde{I})_{(1)} \times \tilde{J}$ vertically (since othwerwise $z_T$
  would be too close to $p_0$).
  According to Lemma \ref{lemma:findMaximalRectangles} with $\ell=1$, the
  maximal rectangle with shortest height must have height at least $4
  |\tilde{J}|=|\pi_2(R)|$ and the heights of these maximal rectangles increase
  exponentially. Therefore the number of maximal rectangles going through
  $(\tilde{I})_{(1)}$ is at most
$$
\log_2(2^{2m}|\pi_2(R)|) \leq 2m + \log_2(|\pi_2(R)|) \leq 2m.
$$

\end{proof}

The proof of Theorem \ref{theorem:ZPoints} is complete.


\section{Extensions and open questions} \label{section:FurtherRemarks}

\subsection{Failure of sparse bound for bi-parameter martingale transform}
In this subsection, we extend our main theorem to the bi-parameter martingale
transform, which is a $0$-complexity dyadic shift that resembles the behavior
of the bi-parameter Hilbert transform. In general, given a sequence
$\sigma=\{\sigma_R\}_{R\in\mathcal{D}\times \mathcal{D}}$ satisfying
$|\sigma_R|\leq 1$, the corresponding martingale transform is defined as
\[
T_\sigma(f):=\sum_{R\in\mathcal{D}}\sigma_R\langle f,h_R\rangle h_R.
\]We have the following lower bound result.
\begin{theorem}\label{thm:mt}
Let measures $\mu,\nu$ be the same as above. Then there exists a martingale transform $T_\sigma$ such that
\[
|\langle T_\sigma(\mu), \nu\rangle|\gtrsim 2^k.
\]
\end{theorem}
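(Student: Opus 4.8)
The plan is to concentrate $\sigma$ on one carefully chosen dyadic rectangle per point of $\mathcal{Z}$, so that $\langle T_\sigma(\mu),\nu\rangle$ collapses to a sum of manifestly non-negative terms, each recovering the local average $\langle\mu\rangle_{R_z}\sim 2^k$ that produced the lower bound in Proposition \ref{theorem:LowerBoundForM}. For $z\in\mathcal{Z}$ let $R_z$ be the minimal dyadic rectangle containing both $z$ and its associated point $p(z)\in\mathcal{P}$; since $R_z$ realizes the infimum defining $\dist$ we have $|R_z|=\dist(p(z),z)^2\sim 2^{-2m-k}$ by \ref{theorem:ZPoints:DistToPz}. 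Moreover, if $z=z_R$ with $R=I\times J\in\mathcal{E}$, then $z\in I_{(1)}\times J_{(k)}$ by Lemma \ref{lemma:pointFinder2}, and from this one checks that $\pi_1(R_z)=I$ and $\pi_2(R_z)=\widehat{J_{(k)}}$ (the two points $p(z)$, $z$ lie in different children of $I$, resp.\ of $\widehat{J_{(k)}}$).

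The crucial point is that $R_z$ is small enough to isolate $z$ and $p(z)$: (a) $R_z\cap\mathcal{P}=\{p(z)\}$, because $|R_z|<2^{-2m}$ and $\mathcal{P}$ is $2^{-m}$-separated; and (b) $R_z\cap\mathcal{Z}=\{z\}$. For (b): if $z'\in R_z\cap\mathcal{Z}$ then $\dist(p(z),z')\le|R_z|^{1/2}<2^{-m-1/2}$, so property \ref{theorem:ZPoints:DistToP} forces $p(z')=p(z)=:p$; writing $z'=z_{R'}$ with $R'=I'\times J'\in\mathcal{E}_p$, the inclusions $\pi_1 z'\in\pi_1(R_z)=I$ and $\pi_2 z'\in\pi_2(R_z)=\widehat{J_{(k)}}$, combined with the fact that $I'_{(1)}$ misses $\pi_1 p$ and $J'_{(k)}$ misses $\pi_2 p$, force $I'_{(1)}\subsetneq I$ and $J'_{(k)}\subsetneq\widehat{J_{(k)}}$, hence $|I'|\le|I|$ and $|J'|\le|J|$; since $|I'||J'|=|I||J|=2^{-2m-2}$ this yields $I'=I$, $J'=J$, and $z'=z$. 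In particular $z\mapsto R_z$ is injective.

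Granting (a)--(b), the rest is a short computation. The product Haar functions $h_R=h_I\otimes h_J$ form a real orthonormal system, so $\langle T_\sigma(\mu),\nu\rangle=\sum_R\sigma_R\langle\mu,h_R\rangle\langle\nu,h_R\rangle$. I take $\sigma_{R_z}=\operatorname{sgn}\!\big(\langle\mu,h_{R_z}\rangle\langle\nu,h_{R_z}\rangle\big)$ for $z\in\mathcal{Z}$ and $\sigma_R=0$ otherwise; then $|\sigma_R|\le1$ and, $\mathcal{Z}$ being finite, $T_\sigma$ is a finite sum. By (a) only $p(z)$ contributes to $\langle\mu,h_{R_z}\rangle$, so $|\langle\mu,h_{R_z}\rangle|=|R_z|^{-1/2}/\#\mathcal{P}$; by (b) only $z$ contributes to $\langle\nu,h_{R_z}\rangle$, so $|\langle\nu,h_{R_z}\rangle|=|R_z|^{-1/2}/\#\mathcal{Z}$. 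Hence, using $\#\mathcal{P}=2^{2m+1}$ (Theorem \ref{theorem:PPoints}), the injectivity of $z\mapsto R_z$, and $|R_z|\sim 2^{-2m-k}$,
\[
  |\langle T_\sigma(\mu),\nu\rangle|=\sum_{z\in\mathcal{Z}}\frac{1}{\#\mathcal{P}\,\#\mathcal{Z}\,|R_z|}\sim\frac{\#\mathcal{Z}\cdot 2^{2m+k}}{\#\mathcal{P}\,\#\mathcal{Z}}=\frac{2^{2m+k}}{2^{2m+1}}\sim 2^k.
\]

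The only genuinely delicate step is (b): that $R_z$, of area $\sim 2^{-2m-k}$, contains no point of $\mathcal{Z}$ other than $z$, even though $\#\mathcal{Z}\sim m2^{2m}$. This is exactly where the non-clustered placement of the points of $\mathcal{Z}$ along hyperbolic spheres about the points of $\mathcal{P}$ (Section \ref{section:ConstructionOfSets}) is used, and it is what rules out any cancellation in the single Haar coefficient $\langle\nu,h_{R_z}\rangle$; everything else is bookkeeping with the Haar system, and the passage from measures to genuine $L^1$ functions is the same limiting argument used for Theorem \ref{MainTheorem}. One could equally take $\sigma_R=\operatorname{sgn}(\langle\mu,h_R\rangle\langle\nu,h_R\rangle)$ for every $R$ and restrict the resulting non-negative sum $\sum_R|\langle\mu,h_R\rangle||\langle\nu,h_R\rangle|$ to the subfamily $\{R_z:z\in\mathcal{Z}\}$, obtaining the same bound.
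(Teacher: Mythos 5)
Your proof is correct and follows essentially the same route as the paper: your rectangle $R_z$ is precisely the paper's $T_{R_z}=\widehat{I_{(1)}}\times\widehat{J_{(k)}}$, your claim (b) is exactly the paper's Lemma \ref{lem:TRz} (you prove it via the area constraint $|I'||J'|=|I||J|$ rather than the paper's casework on $|I|$ versus $|I'|$, but the content is the same), and the choice $\sigma_{R_z}=\operatorname{sgn}(\langle\mu,h_{R_z}\rangle\langle\nu,h_{R_z}\rangle)$ with $\sigma$ vanishing elsewhere is the paper's choice rewritten in bilinear rather than pointwise form.
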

Recalling the
upper estimate of sparse forms obtained in Subsection
\ref{section:UpperBounds}, one immediately gets the following corollary:
\begin{corollary}
There cannot be any $(1,1)$ sparse domination for all bi-parameter martingale transforms.
\end{corollary}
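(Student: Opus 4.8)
The plan is to exhibit a single admissible symbol $\sigma=\{\sigma_R\}$ --- with $|\sigma_R|\le 1$ (in fact $\sigma_R\in\{-1,0,1\}$ and $\sigma_R=0$ off a finite set) --- for which $|\langle T_\sigma\mu,\nu\rangle|$ is already of size $2^{k}$. As with Theorem \ref{MainTheorem}, the statement for honest $L^{1}$ functions then follows from the limiting argument described at the start of Section \ref{section:LowerBounds}; here there is nothing delicate, since $\sigma$ is finitely supported, so $T_\sigma\mu$ is a finite linear combination of bounded Haar functions and $\langle T_\sigma\mu,\nu\rangle$ is a genuine finite sum. The Corollary is then immediate on combining the resulting lower bound with the upper bound of Proposition \ref{theorem:UpperBound}, taking $m=k2^{2k}$, and letting $k\to\infty$ --- exactly as in the proof of Theorem \ref{MainTheorem}. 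Throughout I write $h_R=|R|^{-1/2}\epsilon_R$, where $\epsilon_R$ is the $\pm1$-valued tensor Haar function supported on $R$.

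The choice of $\sigma$ mirrors the lower bound of Proposition \ref{theorem:LowerBoundForM}: there, for each $z\in\mathcal{Z}$ the \emph{single} minimal dyadic rectangle containing $z$ and $p(z)$ already has average $\langle\mu\rangle\sim 2^{k}$ over it. For each $z\in\mathcal{Z}$, say $z=z_S$ with $S=I_S\times J_S\in\mathcal{E}$, set
\[
  R_z\;=\;R_S\;:=\;I_S\times\widehat{(J_S)_{(k)}}.
\]
Since $z_S\in(I_S)_{(1)}\times(J_S)_{(k)}$ while $p_S=(x,y)$ satisfies $x\in I_S\setminus(I_S)_{(1)}$ and $y\in\widehat{(J_S)_{(k)}}\setminus(J_S)_{(k)}$, the rectangle $R_z$ is precisely the minimal dyadic rectangle containing $p_S$ and $z$, and $|R_z|=2^{-2m-k-1}$, in agreement with \ref{theorem:ZPoints:DistToPz}. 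Now put $\mathcal{G}=\{R_z:z\in\mathcal{Z}\}$ and define $\sigma_R=\epsilon_R(p_R)\,\epsilon_R(z_R)$ when $R\in\mathcal{G}$ --- where $p_R$ and $z_R$ are the unique points of $\mathcal{P}$ and $\mathcal{Z}$ in $R$, see below --- and $\sigma_R=0$ otherwise. After a generic perturbation of the points of $\mathcal{P}\cup\mathcal{Z}$ inside the small cubes in which they are chosen (as is available from Section \ref{section:ConstructionOfSets}), all the $\epsilon_R(\cdot)$ above are honest $\pm1$, so $\sigma$ is admissible.

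Two facts make the sum collapse in our favour. First, $|R_z|<2^{-2m}$, so by \eqref{eq:PPoints:SeparationOfPoints} the rectangle $R_z$ contains exactly one point of $\mathcal{P}$, namely $p(z)$. Second, $R_z$ contains no point of $\mathcal{Z}$ other than $z$: if $z'=z_{S'}\in R_z$ then, by the smallness of $R_z$ and \ref{theorem:ZPoints:DistToP} (equivalently Lemma \ref{lemma:pointFinder2}), one must have $p(z')=p(z)$, hence $S'\in\mathcal{E}_{p(z)}$, so $S'=\Dil_j S$ for some $j$; a short case check on the sign of $j$ --- using $z'\in(\pi_1(S'))_{(1)}\times(\pi_2(S'))_{(k)}$ together with the elementary fact that two dyadic intervals of equal length are either equal or disjoint --- shows that $\pi_1(z')\notin I_S$ when $j\ge 1$ and $\pi_2(z')\notin\widehat{(J_S)_{(k)}}$ when $j\le -1$, so $z'\notin R_z$ unless $S'=S$. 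In particular $z\mapsto R_z$ is injective, $\#(\mathcal{G})=\#(\mathcal{Z})$, and $\langle\mu,h_{R_z}\rangle=\epsilon_{R_z}(p(z))\big(\#(\mathcal{P})\,|R_z|^{1/2}\big)^{-1}$, $\langle\nu,h_{R_z}\rangle=\epsilon_{R_z}(z)\big(\#(\mathcal{Z})\,|R_z|^{1/2}\big)^{-1}$. Consequently
\[
  \langle T_\sigma\mu,\nu\rangle=\sum_{z\in\mathcal{Z}}\sigma_{R_z}\,\langle\mu,h_{R_z}\rangle\,\langle\nu,h_{R_z}\rangle=\sum_{z\in\mathcal{Z}}\frac{1}{\#(\mathcal{P})\,\#(\mathcal{Z})\,|R_z|}=\frac{\#(\mathcal{Z})\,2^{2m+k+1}}{\#(\mathcal{P})\,\#(\mathcal{Z})}=2^{k},
\]
using $\#(\mathcal{P})=2^{2m+1}$ (see \eqref{eq:PPoints:NumberOfPoints}). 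This is even slightly stronger than the asserted $\gtrsim 2^{k}$.

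I expect the main obstacle to be the second fact above --- that each $R_z$ isolates its pair $(p(z),z)$ from all other points of $\mathcal{Z}$ --- since it is precisely this that lets the coherent choice of signs add up rather than cancel; a plain $L^{2}$ (random-sign) argument over the same family would only control $|\langle\mu,h_{R_z}\rangle|\,|\langle\nu,h_{R_z}\rangle|$, which is already small, and would lose a factor $\sim 2^{2m}$, far too much to close the estimate. That isolation property is nevertheless a purely combinatorial statement, of the same nature as the disjointness arguments used to construct $\mathcal{Z}$ in Subsection \ref{subsection:buildingZ}, so I anticipate it being routine once the relevant rectangles are lined up.
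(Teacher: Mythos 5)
Your proposal is correct and is essentially the paper's own argument (Theorem \ref{thm:mt} together with Lemma \ref{lem:TRz}). The rectangles you call $R_z=I_S\times\widehat{(J_S)_{(k)}}$ are exactly what the paper denotes $T_R=\widehat{I_{(\ell)}}\times\widehat{J_{(k)}}$ with $\ell=1$; your choice $\sigma_{R_z}=\epsilon_{R_z}(p(z))\,\epsilon_{R_z}(z)$ is the paper's choice $\sigma_{T_{R_z}}=\xi_{T_{R_z}}$; and the ``isolation'' fact you anticipated would be the main obstacle --- that $R_z$ contains no point of $\mathcal Z$ other than $z$ --- is precisely Lemma \ref{lem:TRz}, proved there by the same dyadic case split on whether $|\pi_1(S')|$ is larger or smaller than $|\pi_1(S)|$ that you sketch.
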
 

\begin{proof}[Proof of Theorem \ref{thm:mt}]
We first assume that $\sigma_R=0$ unless $R\subset [0,1]^2$. In general, for any $R=I\times J\in \mathcal{E}$ that gives rise to some point $z\in\mathcal{Z}$, denote $T_R:=\widehat{I_{(\ell)}}\times \widehat{J_{(k)}}$, which is the smallest rectangle that contains both $z$ and $p(z) \in \mathcal{P}$. Also, denote the standard rectangle that gives rise to $z\in\mathcal{Z}$ by $R_z$. Now for any fixed $z\in \mathcal{Z}$, 
\[
T_\sigma(\mu)(z)=\frac{1}{\#\mathcal{P}}\sum_{p\in \mathcal{P}}\sum_{R\subset [0,1]^2}\sigma_R h_R(p) h_R(z).
\]
Set $\sigma_{T}=0$ unless $T=T_R$ for some $R\in \mathcal{E}$. In particular, $\{T_R\}_{R\in\mathcal{E}}$ and $\{R\}_{R\in\mathcal{E}}$ are one-to-one, and each $T_R$ contains exactly one $p_R\in\mathcal{P}$. Hence,
\[
T_\sigma(\mu)(z)\sim 2^{-2m}\sum_{R\in\mathcal{E}} \sigma_{T_R}h_{T_R}(p_R) h_{T_R}(z).
\]

Note that $z$ cannot be contained in $T_R$ unless $R=R_z$ according to Lemma \ref{lem:TRz} below, therefore, 
\[
T_\sigma(\mu)(z)\sim 2^{-2m} \sigma_{T_{R_z}} h_{T_{R_z}}(p_{R_z})h_{T_{R_z}}(z)=2^{-2m}\sigma_{T_{R_z}}|T_{R_z}|^{-1}\xi_{T_{R_z}},
\]where $\xi_{T_{R_z}}=\pm 1$. Choosing $\sigma_{T_{R_z}}=\xi_{T_{R_z}}$, one has
\[
T_{\sigma}(\mu)(z)\sim 2^{-2m}|T_{R_z}|^{-1}=2^{-2m}\cdot \left(2^{-k+1}|R_z|\right)^{-1}\sim 2^k,
\]and the desired estimate follows immediately.
\end{proof}

In the proof above, we have used the key observation that even though $R\in \mathcal{E}$ can contain many points $z\in\mathcal{Z}$ with $p_{R}=p_{R_z}$, $T_R$ can only contain one $z_R\in \mathcal{Z}$. This is justified by the following result.
\begin{lemma}\label{lem:TRz}
For any integers $k,\ell \geq 1$ and $R=I\times J\in\mathcal{E}$, let $z_R\in \mathcal{Z}$ be the point chosen in $R$ as in Subsection \ref{subsection:buildingZ}, and $T_R=\widehat{I_{(\ell)}}\times \widehat{J_{(k)}}$. Then
\[
z_T\notin T_R,\quad \forall R, T\in \mathcal{E}, \, R\neq T.
\]
\end{lemma}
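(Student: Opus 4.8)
The plan is to argue by contradiction, exploiting the fact that a point $z_T$ chosen inside $T = W \times H \in \mathcal{E}$ lives in the very specific slab $W_{(1)} \times H_{(k)}$ adjacent to $p_T$, while $T_R = \widehat{I_{(\ell)}} \times \widehat{J_{(k)}}$ is, by construction, the \emph{minimal} dyadic rectangle containing both $p_R$ and $z_R$. So suppose $z_T \in T_R$ with $R \neq T$. First I would record the elementary location facts: $z_T \in W_{(1)} \times H_{(k)}$ means $\pi_1(z_T) \notin W$ but $\pi_1(z_T) \in \widehat{W}$, and $\delta(\pi_2(p_T), \pi_2(z_T)) = 2^{-k+1}|H|$, so that the smallest dyadic rectangle containing $\{p_T, z_T\}$ is $\widehat{W} \times \widehat{H_{(k)}}$. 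Similarly $T_R = \widehat{I_{(\ell)}} \times \widehat{J_{(k)}}$ is the smallest dyadic rectangle containing both $p_R$ and $z_R$ (here $\ell = 1$ in the construction of $\mathcal{Z}$, so $\widehat{I_{(1)}} = I$, but I would keep general $\ell$ since that is the statement). The containment $z_T \in T_R$ then forces $\widehat{W} \subseteq \widehat{I_{(\ell)}}$ and $\widehat{H_{(k)}} \subseteq \widehat{J_{(k)}}$ in each coordinate — more precisely, $\pi_1(z_T) \in \widehat{I_{(\ell)}}$ and $\pi_2(z_T) \in \widehat{J_{(k)}}$.

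The heart of the argument is then to pin down $p_T$ and show it must equal $p_R$, which via Lemma \ref{lemma:pointFinder2} (the point $z_T = z_R$ is close to exactly one point of $\mathcal{P}$) will force $R = T$, since each standard rectangle $R$ produces exactly one $z_R$ and distinct standard rectangles sharing the same $p$ are distinguished by their eccentricities, which are in turn pinned by the slab $\widehat{I_{(\ell)}} \times \widehat{J_{(k)}}$ that $z_R$ sits in. Concretely: since $z_T \in T_R$ and $\dist(p_T, z_T)^2 < 2^{-2m-1}$ (property \ref{theorem:ZPoints:DistToP}, as $z_T \in \mathcal{Z}$), while every $q \in \mathcal{P}$, $q \neq p(z_T)$, satisfies $\dist(q, z_T)^2 \geq 2^{-2m-1}$, the point $p_T$ is \emph{the} closest point of $\mathcal{P}$ to $z_T$. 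Now $T_R$ contains exactly one point of $\mathcal{P}$, namely $p_R$ (this is because $|T_R| = |\widehat{I_{(\ell)}} \times \widehat{J_{(k)}}| = 2^{\ell - \ell + 1}\cdot 2 \cdot 2^{-2m-2}\cdot$... — I would just compute $|T_R| = 2\cdot 2 \cdot |I_{(\ell)}|\cdot|J_{(k)}|$ and note $|I_{(\ell)}| = 2^{-\ell}|I|$, $|J_{(k)}| = 2^{-k}|J|$, but more robustly: $T_R$ is contained in a dyadic rectangle of area $\leq 2^{-2m}$ — actually I should be careful here and instead invoke Lemma \ref{lemma:LocalizePs}-style reasoning, or simply that $p_R \in T_R$ and any other $q \in \mathcal{P} \cap T_R$ would give $\dist(p_R, q)^2 \leq |T_R|$, and I need $|T_R| < 2^{-2m}$ to contradict \eqref{eq:PPoints:SeparationOfPoints}). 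Granting $\mathcal{P} \cap T_R = \{p_R\}$, the closest point $p_T$ of $\mathcal{P}$ to $z_T$ is in $T_R$ (since $z_T \in T_R$ and $p_T$ is at hyperbolic-distance-squared $< 2^{-2m-1} < |T_R|$... no — I should instead say: $\dist(p_T, z_T)$ is so small that the minimal rectangle containing both is tiny, hence contained in $T_R$ once $z_T \in T_R$, by a Lemma \ref{lem:dist}-type nesting argument). Hence $p_T = p_R$.

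The main obstacle I anticipate is exactly this last nesting step: deducing from "$z_T \in T_R$ and $p_T$ is very close to $z_T$" that "$p_T \in T_R$," since $\dist$ badly fails the triangle inequality. The clean way around it is to avoid distances entirely and argue combinatorially with the dyadic structure: from $z_T \in \widehat{W}\times\widehat{H_{(k)}}$ (its minimal enclosing rectangle with $p_T$) and $z_T \in \widehat{I_{(\ell)}}\times\widehat{J_{(k)}}$, compare the first coordinates — $\pi_1(z_T) \in \widehat{W} \cap \widehat{I_{(\ell)}}$, both dyadic, so one contains the other; if $\widehat{I_{(\ell)}} \subsetneq \widehat{W}$ then $z_T$'s coordinate situation together with $p_T \notin \widehat{W}$-vs-$p_R \in \widehat{I_{(\ell)}}$ is contradictory, and likewise in the reverse inclusion — forcing the eccentricities and then the points to match. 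Once $p_T = p_R =: p$, both $T$ and $R$ lie in $\mathcal{E}_p$; their associated $z$'s lie in slabs $\pi_1(T)_{(1)}\times\pi_2(T)_{(k)}$ and $\pi_1(R)_{(1)}\times\pi_2(R)_{(k)}$ respectively, and $z_T \in T_R$ then pins $\pi_1(T) = \pi_1(R)$ (hence $T = R$ since both have area $2^{-2m-2}$ and share $p$), contradicting $R \neq T$ and completing the proof.
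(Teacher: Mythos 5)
Your high-level skeleton matches the paper's: split according to whether $p_T = p_R$ or not, kill the first case using the separation from Lemma \ref{lemma:pointFinder2}, and in the second case compare the dyadic eccentricities of $T$ and $R$ in $\mathcal{E}_p$. However, there are concrete problems.

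\textbf{Dyadic bookkeeping errors.} By the paper's definition, $W_{(1)}$ is the \emph{child} of $W$ not containing $\pi_1(p_T)$, so $W_{(1)} \subset W$ and hence $\pi_1(z_T) \in W$; your claim ``$\pi_1(z_T) \notin W$ but $\pi_1(z_T) \in \widehat{W}$'' is false. Consequently the minimal dyadic rectangle containing $\{p_T, z_T\}$ is $\widehat{W_{(1)}} \times \widehat{H_{(k)}} = W \times \widehat{H_{(k)}}$, not $\widehat{W} \times \widehat{H_{(k)}}$. (With $\ell = 1$ this is consistent with $T_T = \widehat{\pi_1(T)_{(1)}} \times \widehat{\pi_2(T)_{(k)}}$.) The derived claim ``$z_T \in T_R$ forces $\widehat{W} \subseteq \widehat{I_{(\ell)}}$'' does not follow: membership in $T_R$ only gives $\pi_1(z_T) \in \widehat{I_{(\ell)}} \cap W_{(1)}$, which yields a dyadic nesting in one of the two directions, not the one you want.

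\textbf{The case $p_T \neq p_R$ is correct but over-engineered.} You do not need to show $\mathcal{P} \cap T_R = \{p_R\}$ nor to argue that $p_T$ must belong to $T_R$. The direct route: both $p_R$ and $z_T$ lie in $T_R$ (since $\pi_i(p_R) \in \widehat{I_{(\ell)}}$, resp.\ $\widehat{J_{(k)}}$, by construction), and $|T_R| \leq 2^{-2m-2} < 2^{-2m-1}$, so $\dist(p_R, z_T)^2 < 2^{-2m-1}$; Lemma \ref{lemma:pointFinder2} applied to the rectangle $T$ producing $z_T$ then forces $p_R = p(z_T) = p_T$, done. (The paper phrases this even more simply: the separation estimate shows $z_T \notin R$ at all, and $T_R \subseteq R$.)

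\textbf{The case $p_T = p_R$ — the crux of the lemma — is asserted, not proved.} You write that ``$z_T \in T_R$ then pins $\pi_1(T) = \pi_1(R)$'' but give no argument, and this is precisely the nontrivial content of the lemma. The paper's argument is: write $R = I \times J$, $T = I' \times J'$, and (WLOG) suppose $|I| > |I'|$, hence $J \subsetneq J'$. Both $\widehat{J_{(k)}}$ and $\widehat{J'_{(k)}}$ contain $\pi_2(p)$ and the latter is strictly larger, so $\widehat{J_{(k)}}$ lies in one of the two children of $\widehat{J'_{(k)}}$, namely $J'_{(k)}$ or its dyadic sibling. It cannot lie in $J'_{(k)}$, since $\pi_2(p) \in \widehat{J_{(k)}}$ but $\pi_2(p) \notin J'_{(k)}$ by construction. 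Therefore $\widehat{J_{(k)}}$ lies in the sibling of $J'_{(k)}$. But $\pi_2(z_T) \in J'_{(k)}$, so $\pi_2(z_T) \notin \widehat{J_{(k)}}$, i.e.\ $z_T \notin T_R$. The reverse inequality $|I| < |I'|$ is symmetric (work with first coordinates). Without some such argument the proof is incomplete.
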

\begin{proof}
Fix $R=I\times J\in \mathcal{E}$ and denote $z=z_R$. Given any other $R'=I'\times J'\in \mathcal{E}$, our goal is to show $z_{R'}\notin T_R$. Obviously, if $p_{R'}\neq p_{R}$, then by Lemma \ref{lemma:pointFinder2} $z_{R'}$ is not even contained in $R$. It thus suffices to assume $p_{R'}=p_{R}=p$. Suppose $|I|>|I'|$, i.e. $I=I'^{(j)}$ for some $j\geq 1$, then $|J|=2^{-j}|J'|$ and $J\subset J'$. 
Since both $\widehat{J_{(k)}}, \widehat{J'_{(k)}}$ contain $\pi_2(p)$, one has $\widehat{J'_{(k)}}\supsetneq \widehat{J_{(k)}}$. 

We claim that $\pi_2(z_{R'})\notin \widehat{J_{(k)}}$, hence $z_{R'}\notin T_R$. Indeed, $\pi_2(z_{R'})\in J'_{(k)}$, and $\widehat{J_{(k)}}$ is either contained in $J'_{(k)}$ or its dyadic sibling. But it is impossible for $\widehat{J_{(k)}}$ to be contained in $J'_{(k)}$ as it would imply $\pi_2(p)\in J'_{(k)}$, which is an obvious contradiction. Therefore, $\pi_2(z_{R'})\notin \widehat{J_{(k)}}$ and the proof for the case $|I|>|I'|$ is complete.

The case $|I|<|I'|$ can be treated symmetrically, as one has $|J|<|J'|$ and can show in the same way as above that $\pi_1(z_{R'})\notin \widehat{I_{(\ell)}}$.
\end{proof}

\begin{remark}
Using the same method, one can easily show that there exists a bi-parameter dyadic shift of any given complexity which cannot have a $(1,1)$ sparse bound. We omit the details.
\end{remark}


\subsection{Failure of sparse bound in higher dimensions}
For $n\geq 1$, define the $n$-parameter strong maximal function
\[
\mathcal{M}_n(f)(x):=\sup_{R\ni x}\frac{1}{|R|}\int_R |f(y)|\,dy,\quad f:\mathbb{R}^n\to \mathbb{C},
\]where $R$ is any $n$-dimensional dyadic rectangle. Clearly, $\mathcal{M}_S$ equals $\mathcal{M}_2$. We have the following theorem.
\begin{theorem}
Let $(D_n)$ denote the following statement: there exists $C>0$ such that for all compactly supported integrable functions $f,g$ on $\mathbb{R}^n$, there exists a sparse collection of dyadic rectangles $\mathcal{S}$ such that
\[
\left|\langle \mathcal{M}_n(f), g\rangle\right|\leq C\sum_{R\in\mathcal{S}}\langle |f|\rangle_R\langle |f|\rangle_R |R|.
\]Then for all $n\geq 2$, there holds
\[
(D_n)\implies (D_{n-1}).
\]
\end{theorem}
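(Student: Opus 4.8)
The plan is to prove the contrapositive-free implication $(D_n) \implies (D_{n-1})$ by a tensoring argument: given a putative sparse domination in $n$ parameters, we restrict to functions that are tensor products of an $(n-1)$-parameter function with a fixed ``bump'' in the last variable, and show that the $n$-parameter sparse form collapses to an $(n-1)$-parameter sparse form. First I would fix a nice function $\varphi$ on $\mathbb{R}$ supported on $[0,1)$ (say $\varphi = \mathbbm{1}_{[0,1)}$, or a smooth approximation if one prefers genuine integrability), and given compactly supported integrable $f, g$ on $\mathbb{R}^{n-1}$ form $F(x,t) = f(x)\varphi(t)$ and $G(x,t) = g(x)\varphi(t)$ on $\mathbb{R}^n$. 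The first observation is that $\mathcal{M}_n(F)(x,t)$ controls $\mathcal{M}_{n-1}(f)(x)$ for $t \in [0,1)$: taking rectangles of the form $R' \times [0,1)$ in the supremum defining $\mathcal{M}_n$, one gets $\mathcal{M}_n(F)(x,t) \geq c\,\mathcal{M}_{n-1}(f)(x)$ on the slab, so $\langle \mathcal{M}_n(F), G\rangle \gtrsim \langle \mathcal{M}_{n-1}(f), g\rangle$ (using that $g, \varphi \geq 0$ or by taking absolute values appropriately — here one should be slightly careful with signs, but $\mathcal{M}_{n-1}(f) \geq 0$ always, so pairing against $|g|\varphi$ is harmless after replacing $g$ by $|g|$).

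Next I would feed $F, G$ into the hypothesis $(D_n)$ to obtain a sparse family $\mathcal{S}$ of $n$-dimensional dyadic rectangles with $\langle \mathcal{M}_n F, G\rangle \leq C\sum_{R\in\mathcal{S}}\langle |F|\rangle_R\langle |G|\rangle_R|R|$. The key step is then to push this forward to an $(n-1)$-parameter sparse bound. Write each $R \in \mathcal{S}$ as $R = R' \times K$ with $R'$ an $(n-1)$-dimensional dyadic rectangle and $K$ a dyadic interval. Because $F, G$ vanish outside the slab $\mathbb{R}^{n-1}\times[0,1)$, only rectangles with $K$ meeting $[0,1)$ contribute, and for such $K$ one has $\langle |F|\rangle_{R'\times K} = \langle|f|\rangle_{R'}\cdot\langle\varphi\rangle_K$ and similarly for $G$, so each summand factors as $\langle|f|\rangle_{R'}\langle|g|\rangle_{R'}|R'|\cdot(\langle\varphi\rangle_K^2|K|)$. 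I would then group the rectangles in $\mathcal{S}$ by their shadow $R'$ and sum the vertical weights $\sum_{K:\,R'\times K\in\mathcal{S}}\langle\varphi\rangle_K^2|K|$; the point is that sparseness of $\mathcal{S}$ forces this sum to be $O(1)$ uniformly in $R'$ — indeed if $\varphi=\mathbbm{1}_{[0,1)}$ then $\langle\varphi\rangle_K^2|K| = |K\cap[0,1)|^2/|K| \leq |K\cap[0,1)|$, and the disjoint major subsets $E(R'\times K)\subseteq R'\times K$ project down to sets that are essentially disjoint in the $t$-variable inside $[0,1)$ over a fixed shadow, giving a geometric-type bound. Collecting the shadows $R'$ that actually occur into a collection $\mathcal{S}'$ and checking it is sparse (the projections of the disjoint sets $E(R)$, or a Carleson-packing computation via the equivalence cited from \cite{Hanninen2017arxiv}, do this) yields $\langle\mathcal{M}_{n-1}f, g\rangle \lesssim \sum_{R'\in\mathcal{S}'}\langle|f|\rangle_{R'}\langle|g|\rangle_{R'}|R'|$, which is exactly $(D_{n-1})$.

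The main obstacle I anticipate is the bookkeeping in the second paragraph: verifying that after fixing a shadow $R'$ the collection of vertical intervals $\{K : R'\times K \in \mathcal{S}\}$ has total weight $\sum \langle\varphi\rangle_K^2|K| = O(1)$, and simultaneously that the resulting shadow collection $\mathcal{S}'$ is genuinely $\eta'$-sparse for some $\eta'$ depending only on $\eta$ and $n$. These two facts pull in slightly opposite directions — making the vertical sum small wants many overlapping $K$'s over one $R'$, while sparseness of $\mathcal{S}'$ wants the shadows not to overlap too much — so the cleanest route is probably to work entirely on the Carleson side (using the sparse $\Leftrightarrow$ Carleson equivalence), where one checks that $\alpha_{R'} := \sum_{K:R'\times K\in\mathcal{S}}\langle\varphi\rangle_K^2|K| \cdot |R'|^{-1}\cdot(\text{something})$ inherits a packing condition from the $n$-dimensional Carleson constant of $\mathcal{S}$ by testing against sets of the form $\Omega'\times[0,1)$. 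A secondary, purely cosmetic obstacle is the requirement that $f,g$ be \emph{integrable}: since $\mathbbm{1}_{[0,1)}$ is integrable and $f,g$ are already assumed integrable and compactly supported, $F,G$ are integrable and compactly supported with no further work, so this causes no real difficulty.
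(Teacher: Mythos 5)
Your overall strategy — tensor $f$ and $g$ with $\mathbbm{1}_{[0,1)}$ in the extra variable, observe $\mathcal{M}_n(F)\geq \mathcal{M}_{n-1}(f)$ on the slab, feed $F,G$ into $(D_n)$, factor the averages, collapse the vertical direction, and verify the resulting $(n-1)$-dimensional weight sequence on the Carleson side — is exactly the route the paper takes, and your instinct that the cleanest version works at the Carleson-packing level (defining $\beta_{R'}=\sum_{K}\alpha_{R'\times K}\langle\varphi\rangle_K^2|K|$ and checking $\{\beta_{R'}\}$ is Carleson) is also correct.

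However, the last step as you describe it has a genuine gap: testing the $n$-dimensional Carleson condition only against sets of the form $\Omega'\times[0,1)$ does not suffice. Such a test set only captures rectangles $R'\times K$ with $K\subseteq[0,1)$; but the weight $\beta_{R'}$ also receives contributions from rectangles whose vertical side $K$ is much larger than $[0,1)$ (for which $\langle\mathbbm{1}_{[0,1)}\rangle_K^2|K|=1/|K|$), and these are not controlled by packing inside $\Omega'\times[0,1)$. The missing ingredient is a dyadic decomposition by eccentricity: split the vertical intervals into classes $j\geq 0$ according to $|K\cap[0,1)|\sim 2^{-j}|K|$, note that every such $R'\times K$ with $R'\subset\Omega'$ lies inside $\widetilde{\Omega}_j:=\Omega'\times\{t:M(\mathbbm{1}_{[0,1)})(t)>c2^{-j}\}$, and invoke the weak-$L^1$ bound for the one-dimensional Hardy--Littlewood maximal function to get $|\widetilde{\Omega}_j|\lesssim 2^{j}|\Omega'|$. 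The factor $\langle\mathbbm{1}_{[0,1)}\rangle_K^2\sim 2^{-2j}$ then beats the $2^j$ growth, giving $\sum_{R'\subset\Omega'}\beta_{R'}|R'|\lesssim\Lambda\sum_j 2^{-j}|\Omega'|\lesssim\Lambda|\Omega'|$. This $j$-decomposition together with the weak-type estimate is the heart of the Carleson verification, and without it the argument does not close.
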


\begin{proof}
The desired result follows from a tensor product argument. For the sake of simplicity, we will only prove $(D_3)\implies (D_2)$. Our goal is to show $(D_2)$: for any given functions $f,g$ on $\mathbb{R}^2$, we would like to find a sparse dominating form for $\langle \mathcal{M}_2(f),g\rangle$. Define $\widetilde{f}=f\otimes \chi_{[0,1)}$ and $\widetilde{g}=g\otimes \chi_{[0,1)}$, both are compactly supported integrable functions on $\mathbb{R}^3$. By the assumption $(D_3)$, there exists a sparse collection $\widetilde{\mathcal{S}}$ of rectangles in $\mathbb{R}^3$ such that
\[
|\langle M_3(\widetilde{f}), \widetilde{g}\rangle|\leq C\sum_{\widetilde{R}=R\times J\in\widetilde{\mathcal{S}}} \langle |\widetilde{f}|\rangle_{\widetilde{R}}\langle |\widetilde{g}|\rangle_{\widetilde{R}} |\widetilde{R}|,
\]where $R\in \mathbb{R}^2, J\in \mathbb{R}$.

By definition, for all $\widetilde{x}=(x,x_3)\in \mathbb{R}^3$ with $x_3\in [0,1)$,
\[
M_3(\widetilde{f})(\widetilde{x})=\sup_{R\times J\ni \widetilde{x}} \frac{1}{|R|\cdot|J|}\int_{R\times J}|\widetilde{f}(\widetilde{y})|\,d\widetilde{y}=M_2(f)(x) M(\mathbbm{1}_{[0,1)})(x_3)=M_2(f)(x).
\]Therefore,
\[
\langle M_3(\widetilde{f}), \widetilde{g}\rangle_{\mathbb{R}^3}=\langle M_2(f), g\rangle_{\mathbb{R}^2}.
\]It thus suffices to show that the sparse form in $\mathbb{R}^3$ can be dominated by a sparse form in $\mathbb{R}^2$.

To see this, rewrite
\[
\sum_{\widetilde{R}=R\times J\in\widetilde{\mathcal{S}}} \langle |\widetilde{f}|\rangle_{\widetilde{R}}\langle |\widetilde{g}|\rangle_{\widetilde{R}} |\widetilde{R}|=\sum_{\widetilde{R}=R\times J\subset \mathbb{R}^3} \alpha_{\widetilde{R}} \langle |\widetilde{f}|\rangle_{\widetilde{R}}\langle |\widetilde{g}|\rangle_{\widetilde{R}} |\widetilde{R}|
\]where $\alpha_{\widetilde{R}}=1$ if $\widetilde{R}\in\widetilde{\mathcal{R}}$, and $0$ otherwise. The sparsity of $\widetilde{\mathcal{R}}$ thus means for some constant $\Lambda$ there holds
\[
\sum_{\widetilde{R}\subset \widetilde{\Omega}}\alpha_{\widetilde{R}}|\widetilde{R}|\leq \Lambda |\widetilde{\Omega}|,\quad \forall \text{ open set }\widetilde{\Omega}\subset\mathbb{R}^3.
\]One can further simplify the expression
\[
\begin{split}
&\sum_{\widetilde{R}=R\times J\subset \mathbb{R}^3} \alpha_{\widetilde{R}} \langle |\widetilde{f}|\rangle_{\widetilde{R}}\langle |\widetilde{g}|\rangle_{\widetilde{R}} |\widetilde{R}|\\
=&\sum_{R\subset\mathbb{R}^2}\langle |f|\rangle_R \langle |g|\rangle_R |R| \sum_{J\subset \mathbb{R}} \alpha_{R\times J} |J| \cdot \left(\frac{|[0,1)\cap J|}{|J|}\right)^2\\
=:& \sum_{R\subset\mathbb{R}^2}\langle |f|\rangle_R \langle |g|\rangle_R |R| \beta_R.
\end{split}
\]It suffices to show that $\{\beta_R\}_{R\subset\mathbb{R}^2}$ is a Carleson sequence.

Indeed, decomposing
\[
\beta_R=\sum_{j=0}^{\infty} 2^{-2j}\sum_{\substack{J\subset \mathbb{R}\\ |J\cap [0,1)|\sim 2^{-j}|J|}} \alpha_{R\times J} |J|,
\]one has for all open set $\Omega\subset\mathbb{R}^2$ that
\[
\begin{split}
\sum_{R\subset \Omega} \beta_R|R| &=\sum_{j=0}^{\infty} 2^{-2j} \sum_{R\subset\Omega} \sum_{\substack{J\subset \mathbb{R}\\ |J\cap [0,1)|\sim 2^{-j}|J|}} \alpha_{R\times J} |R\times J|\\
&\leq\sum_{j=0}^{\infty} 2^{-2j} \sum_{R\times J\subset \widetilde{\Omega}} \alpha_{R\times J} |R\times J|,
\end{split}
\]where $\widetilde{\Omega}:=\Omega\times \{x_3\in \mathbb{R}:\, M(\chi_{[0,1)})(x_3)> C2^{-j}\}$. By the weak $L^1$ boundedness of $M$, one has $|\widetilde{\Omega}|\lesssim 2^j|\Omega|$, hence
\[
\sum_{R\subset \Omega} \beta_R|R| \lesssim \Lambda\sum_{j=0}^{\infty} 2^{-j} |\Omega|\lesssim \Lambda |\Omega|.
\]The proof is complete.
\end{proof}

Similar results hold true for multi-parameter martingale transforms as well. One can define $\widetilde{f}=f\otimes h_{[0,1)}$ and $\widetilde{g}=g\otimes h_{[0,1)}$. Then one observes that $|\langle T_\sigma(h_{[0,1)}), h_{[0,1)}\rangle |\geq C$ for some martingale transform $T_\sigma$ in the third variable and therefore can argue similarly as above. The details are left to the reader.


\subsection{Other sparse forms}
As mentioned in the introduction, it is also interesting to consider the possibility of domination of the bi-parameter operators by bigger sparse forms, for instance those of type $(\phi,1)$:
\[
\sum_{R\in\mathcal{S}}\langle |f|\rangle_{R,\phi}\langle |g|\rangle_R |R|,
\]where
\[
  \langle f \rangle_{R,\phi} = \inf\Biggl\{ \lambda > 0:\, \frac{1}{|R|}
    \int_R \phi\Biggl( \frac{f(x)}{\lambda} \Biggr) \leq 1 \Biggr\} \, dx,
\]and interesting norms include for example $\phi(x)=x[\log(e+x)]^\alpha$,
$\alpha>0$, and $\phi(x)=x^p$, $p>1$. We can actually extend our main theorem to the following:

\begin{theorem}\label{thm:sec5norm}
Let $\phi(x)=x[\log(e+x)]^\alpha$, $0<\alpha<\frac{1}{2}$. Then there cannot be any $(\phi,1)$-sparse domination for the strong maximal function or the martingale transform $T_\sigma$.
\end{theorem}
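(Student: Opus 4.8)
The plan is to reuse verbatim the extremal pair from Section \ref{section:LowerBounds}: the uniform measures $\mu = \mu_{\mathcal{P}}$ and $\nu = \nu_{\mathcal{Z}}$, approximated by $L^1$ functions $f,g$ supported on tiny dyadic squares $Q_p$ (resp. $Q_z$) of a \emph{fixed} side length $\rho = 2^{-Nm}$, where $N$ is a large absolute constant chosen so that the constructions of Theorems \ref{theorem:PPoints} and \ref{theorem:ZPoints} still go through with points replaced by such squares. Since $\phi(x) = x[\log(e+x)]^\alpha \geq x$, the $(\phi,1)$-form dominates the $(1,1)$-form, so the lower bound is unaffected: Proposition \ref{theorem:LowerBoundForM} (for $\mathcal{M}_S$) and Theorem \ref{thm:mt} (for $T_\sigma$) still give $\langle \mathcal{M}_S f, g\rangle \gtrsim 2^k$ and $|\langle T_\sigma f, g\rangle| \gtrsim 2^k$ up to the $L^1$-approximation error, which is negligible once $\rho$ is small. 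Thus the whole burden falls on re-proving the upper bound of Proposition \ref{theorem:UpperBound} with $\langle \mu \rangle_R$ replaced by $\langle f \rangle_{R,\phi}$.

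The main new ingredient is the elementary Orlicz bound
\[
  \langle f \rangle_{R,\phi} \;\lesssim\; \langle f \rangle_R\,\Bigl[\log\Bigl(e + \tfrac{\|f\|_{L^\infty(R)}}{\langle f \rangle_R}\Bigr)\Bigr]^{\alpha},
\]
which follows by testing the definition of the Orlicz average with $\lambda = \langle f\rangle_R [\log(e + \|f\|_{L^\infty(R)}/\langle f\rangle_R)]^{\alpha}$ and using $\phi(t) \le t\,[\log(e + \|f\|_{L^\infty(R)}/\langle f\rangle_R)]^{\alpha}$ for $0 \le t \le \|f\|_{L^\infty(R)}/\langle f\rangle_R$. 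Since $f$ is bounded by $\|f\|_\infty = (\#\mathcal{P}\,\rho^2)^{-1} = 2^{(2N-2)m-1}$, one checks—splitting $R$ into its part $\overline{R}$ inside the unit square exactly as in the proof of Proposition \ref{theorem:UpperBound}—that $\|f\|_{L^\infty(R)}/\langle f\rangle_R \lesssim 2^{O(m)}\,(|R|/|\overline{R}|)$ on every rectangle relevant to the sum, so that the logarithm above is $\lesssim m + \log(e + |R|/|\overline{R}|)$. The term $[\log(e+|R|/|\overline{R}|)]^{\alpha}$ is harmless: summed against the $(|\overline{R}|/|R|)^2$-decay already present in \eqref{eq:ProductOfAverages} and the bound \eqref{eq:sizeOfOmegaj} it only produces the convergent series $\sum_j j^{1+\alpha}2^{-j}$. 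The remaining factor contributes an extra $m^{\alpha}$, so that the conclusion of Proposition \ref{theorem:UpperBound} becomes
\[
  \sum_{R \in \mathcal{S}} \langle f \rangle_{R,\phi}\,\langle \nu \rangle_R\,|R| \;\lesssim\; \Lambda\, k\, m^{\alpha}\Bigl(1 + \frac{2^{2k}}{m}\Bigr).
\]

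Finally one re-optimizes the free parameter: choosing $m \sim 2^{2k}$ (instead of $m = k 2^{2k}$) makes the right-hand side $\lesssim \Lambda\, k\, 2^{2\alpha k}$, while the lower bound is still $\gtrsim 2^k$; hence $\Lambda \gtrsim 2^{(1-2\alpha)k}/k \to \infty$ as $k \to \infty$, contradicting any $(\phi,1)$-sparse bound. The same argument with Theorem \ref{thm:mt} in place of Proposition \ref{theorem:LowerBoundForM} handles $T_\sigma$. The step I expect to be the crux is twofold. First, one must organize the $L^1$-regularization so that $\|f\|_{L^\infty(R)}/\langle f\rangle_R$ is genuinely at most $2^{O(m)}\,(|R|/|\overline{R}|)$ \emph{uniformly} over all dyadic rectangles meeting the support of $f$, including highly eccentric ones; this is exactly what pins down the admissible range of $\rho$ and hence the size $m^{\alpha}$ of the correction. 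Second, one has to observe that $\alpha < 1/2$ is precisely the threshold at which the rescaling $m \sim 2^{2k}$ still defeats the lower bound: the two constraints $m^{\alpha} \ll 2^k$ and $2^{2k}m^{\alpha-1} \ll 2^k$ on the choice of $m$ are simultaneously satisfiable exactly when $\alpha < 1/2$, and they become incompatible once $\alpha \ge 1/2$.
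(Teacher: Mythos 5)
Your proposal is correct and follows essentially the same strategy as the paper: keep the same extremal pair, observe that the lower bound on $\langle \mathcal{M}_S f, g\rangle$ is unchanged, show the $(\phi,1)$ upper bound acquires at most an extra factor $m^\alpha$, and re-optimize with $m\sim 2^{2k}$ to force $\Lambda\to\infty$ precisely when $\alpha<1/2$. Where you diverge slightly is in \emph{how} you extract the $m^\alpha$: you invoke a general Orlicz bound $\langle f\rangle_{R,\phi}\lesssim\langle f\rangle_R\,[\log(e+\|f\|_{L^\infty(R)}/\langle f\rangle_R)]^\alpha$ and then feed in $\|f\|_\infty\lesssim 2^{O(m)}$, whereas the paper tests the Orlicz definition directly with an explicit $\lambda$ on each scale, getting the cleaner bounds \eqref{eqn:sec5large}--\eqref{eqn:sec5small}. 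Your route is perhaps more modular (the Orlicz lemma is reusable and makes the origin of the $m^\alpha$ transparent), but it buys nothing extra; in fact it introduces two small wrinkles the paper avoids. First, the remark ``since $\phi(x)\geq x$, the $(\phi,1)$-form dominates the $(1,1)$-form, so the lower bound is unaffected'' is a non sequitur: the lower bound is a statement about $\langle \mathcal{M}_S f,g\rangle$ and never involves sparse forms of either type, so it is unaffected simply because the operator side is untouched; the observation $\phi\geq x$ actually makes the $(\phi,1)$ \emph{upper} bound weaker and hence your task \emph{harder}, not easier. Second, the extra regularization parameter $\rho=2^{-Nm}$ is superfluous: the proof of Theorem \ref{theorem:PPoints} already produces cubes $Q_p$ of side $\sim 2^{-2m}$ that the paper simply reuses, and one does not need to regularize $\nu$ at all (only the first slot carries the Orlicz average). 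Neither issue is a gap --- both are easily repaired --- but they make the exposition more delicate than necessary (in particular one must then check that the ratio $\|f\|_{L^\infty(R)}/\langle f\rangle_R$ stays $\lesssim 2^{O(m)}|R|/|\overline R|$ uniformly over eccentric rectangles that merely graze a single $Q_p$, a case the paper's direct testing disposes of automatically).
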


\begin{proof}
Fix $k\ll m$. We need to first slightly modify the measure $\mu$ to get a function that lies in the correct normed space. Let $\nu$ be the same as before and define
\[
f=\frac{1}{\#\mathcal{P}}\sum_{p\in\mathcal{P}}\frac{\mathbbm{1}_{Q_p}}{|Q_p|},
\]
where for each $p\in\mathcal{P}$, $Q_p\ni p$ is the little cube (of side-length $\sim2^{-2m}$) that is constructed in the proof of Theorem \ref{theorem:PPoints}. By the exact same argument in Section 2, it is easy to check that the same lower bound for the bilinear forms associated to $\mathcal{M}_S$ and the special martingale transform $T_\sigma$ still hold true, i.e.
\[
\langle M_S(f),\nu\rangle\gtrsim 2^k,\quad \langle T_\sigma(f), \nu\rangle\gtrsim 2^k.
\]
We shall now focus on showing the upper bound. For every $\Lambda$-Carleson collection $\mathcal{S}$ we will show that
\begin{equation}\label{eqn:sec5uppbd}
\sum_{R\in\mathcal{S}}\langle f\rangle_{R,\phi}\langle \nu\rangle_{R}|R|\lesssim \Lambda k m^\alpha\Bigl(1+\frac{2^{2k}}{m}\Bigr).
\end{equation}
If $(\ref{eqn:sec5uppbd})$ holds true, then by taking $m=2^{2k}$ and by noting that $\alpha<\frac{1}{2}$, one immediately sees that $\Lambda$ blows up to infinity as $k\to \infty$, which completes the proof.

We now prove (\ref{eqn:sec5uppbd}). Compared to the estimate of $\langle \mu\rangle_R$ in the proof of Theorem \ref{theorem:UpperBound}, it suffices to show that there is at most an extra factor of $m^\alpha$ in the upper estimate of the new average form $\langle f\rangle_{R,\phi}$. Specifically, in the case $|\overline{R}|=|R\cap [0,1)^2|\geq 2^{-2m-1}$, we would like to show that
\begin{equation}\label{eqn:sec5large}
\langle f\rangle_{R,\phi}\lesssim \frac{|\overline{R}|}{|R|}\cdot \max\left(m^\alpha, \left[\log\big(\frac{|R|}{|\overline{R}|}\big)\right]^\alpha\right).
\end{equation}Note that if this estimate holds true, then one can proceed as in the proof of Theorem \ref{theorem:UpperBound} to conclude that contribution to $\sum_{R\in\mathcal{S}}\langle f\rangle_{R,\phi}\langle \nu\rangle_R|R|$ from rectangles of this type is controlled by $\Lambda k (1+m^\alpha)$. Similarly, in the case $|\overline{R}|<2^{-2m-1}$, we will show that
\begin{equation}\label{eqn:sec5small}
\langle f\rangle_{\overline{R},\phi}\lesssim\frac{m^\alpha}{2^{2m}|\overline{R}|},
\end{equation}which, combined with the estimates for large rectangles, completes the proof of (\ref{eqn:sec5uppbd}).

To see why (\ref{eqn:sec5large}) holds, let $\lambda>0$ and one has
\[
\begin{split}
\frac{1}{|R|}\int_R \phi\left(\frac{f(x)}{\lambda}\right)\,dx&=\frac{1}{|R|}\sum_{p\in\mathcal{P}\cap \overline{R}}\int_{R\cap Q_p}\phi\left(\frac{\sum_{q\in\mathcal{P}}\frac{\mathbbm{1}_{Q_q}(x)}{|Q_q|}}{\lambda\cdot \#\mathcal{P}}\right)\,dx\\
&=\frac{1}{|R|}\cdot \#\{\mathcal{P}\cap \overline{R}\}\cdot |R\cap Q_p| \cdot\phi\left(\frac{1}{\lambda\cdot\#\mathcal{P}\cdot |Q_p|}\right)\\
&\leq \frac{|\overline{R}|}{|R|}\cdot 2^{-2m}\cdot \phi\left(\frac{2^{2m}}{\lambda}\right).
\end{split}
\]
Take $\lambda=\frac{|\overline{R}|}{|R|}\cdot \max\left(m^\alpha, \left[\log\big(\frac{|R|}{|\overline{R}|}\big)\right]^\alpha\right)$, it suffices to show that the expression above with this $\lambda$ value is $\lesssim 1$. Indeed, note that $e\ll \frac{2^{2m}}{\lambda}$, so
\[
\frac{|\overline{R}|}{|R|}\cdot 2^{-2m}\cdot \phi\left(\frac{2^{2m}}{\lambda}\right) \lesssim \frac{\left[2m+\log\big(\frac{|R|}{|\overline{R}|}\big)-\alpha\log\left(\max\big(m, \log(|R|/|\overline{R}|)\big)\right)\right]^\alpha}{\max\left(m^\alpha, \left[\log\big(\frac{|R|}{|\overline{R}|}\big)\right]^\alpha\right)} \lesssim 1.
\]

To see (\ref{eqn:sec5small}), again let $\lambda>0$ and one can calculate similarly as above to obtain
\[
\frac{1}{|\overline{R}|}\int_{\overline{R}} \phi\left(\frac{f(x)}{\lambda}\right)\, dx\leq 2^{-2m}\phi\left(\frac{2^{2m}}{\lambda}\right)=\frac{1}{\lambda}\left[\log\big(e+\frac{2^{2m}}{\lambda}\big)\right]^\alpha.
\]
Take $\lambda=\frac{m^\alpha}{2^{2m}|\overline{R}|}$, then it suffices to show that the expression above with this $\lambda$ value is $\lesssim 1$. Indeed, the lower bound $|\overline{R}|\gtrsim 2^{-2m-k}$ guarantees that $\frac{2^{2m}}{\lambda}\gg e$, hence
\[
\frac{1}{\lambda}\left[\log\big(e+\frac{2^{2m}}{\lambda}\big)\right]^\alpha\lesssim \frac{2^{2m}|\overline{R}|}{m^\alpha}\left[4m+\log|\overline{R}|-\alpha\log m\right]^\alpha\lesssim 2^{2m}|\overline{R}|\lesssim 1.
\]
\end{proof}

The argument in the theorem above is not strong enough to produce a contradiction when $\alpha\geq \frac{1}{2}$. In other words, it could be possible that the bi-parameter operators that are considered have a $(\phi,1)$-type sparse bound with some $\alpha\geq \frac12$. Moreover, it is easy to check that a very similar argument as in Theorem \ref{thm:sec5norm} barely fails for concluding a contradiction when $\phi(x)=x^p$, $p>1$. We think that positive or negative results for those types of bounds would be very interesting.

\begin{problem*}
Determine whether $(\phi,1)$-type sparse bounds for $\mathcal{M}_S$ are possible for $\phi(x)=x^p$, $p>1$ or $\phi(x)=x[\log(e+x)]^\alpha$ ($\alpha\geq \frac12$). 
\end{problem*}

\bibliography{bibliography}
\bibliographystyle{abbrv}

\end{document}